\documentclass[12pt]{amsart}
\usepackage{amsmath,amsthm,amsfonts,amssymb,latexsym,mathrsfs}
\usepackage[top=1in, bottom=1in, left=1in, right=1in]{geometry}
\usepackage{graphics,color}
\usepackage{bm}
\usepackage{times}
\usepackage{mathtools}
\usepackage{tkz-euclide}
\usepackage{array,tabularx}
\usepackage{pstricks-add, pst-eucl}
\usepackage{auto-pst-pdf}
\usepackage{tabularx,booktabs}
\usepackage{graphicx}
\usepackage{xspace}
\usepackage{hyperref}
\usepackage{hhline}
\usepackage{mathtools}
\usepackage{parskip}
\usepackage{thmtools,thm-restate}

\definecolor{myblue}{rgb}{0.09,0.32,0.44} 
\hypersetup{pdfborder={0 0 0},pdfborderstyle={},colorlinks=true,linkcolor=myblue,citecolor=myblue,
urlcolor=blue}

\usepackage[shortlabels]{enumitem}

\usepackage[utf8]{inputenc}

\usepackage{upgreek}
\usepackage{enumitem}

\usepackage{graphicx,tikz}
\usetikzlibrary{shapes.geometric, arrows}

\setlength{\textheight}{230mm} \setlength{\textwidth}{160mm}
\setlength{\oddsidemargin}{1.25mm}
\setlength{\evensidemargin}{1.25mm} \setlength{\topmargin}{0mm}

\pagestyle{myheadings}

\usepackage [autostyle, english = american]{csquotes}
\MakeOuterQuote{"}
\usepackage{amsthm}
\usepackage{thm-restate}

\numberwithin{equation}{section}
\newtheorem{theorem}{Theorem}[section]
\newtheorem{Lemma}[theorem]{Lemma}

\newtheorem{prop}[theorem]{Proposition}

\newtheorem{remark}[theorem]{Remark}

{}

\theoremstyle{remark}


\theoremstyle{remark}

\theoremstyle{definition}

\usepackage[utf8]{inputenc}

\begin{document}
\title[POLYNOMIALS WITH A GIVEN NUMBER OF IRREDUCIBLE FACTORS]{ON THE DISTRIBUTION OF POLYNOMIALS HAVING A GIVEN NUMBER OF IRREDUCIBLE FACTORS OVER FINITE FIELDS}
\author[Arghya Datta]{Arghya Datta}
\address{D\'epartement de math\'ematiques et de statistique\\
Universit\'e de Montr\'eal\\CP 6192 Succ. Centre-ville, Montr\'eal, QC H3C 3J7, Canada}
\email{\href{arghyadatta8@gmail.com}{\nolinkurl{arghya.datta@umontreal.ca}}}

\subjclass[2020]{11T06, 11N37, 11T55, 11N56}
\keywords{polynomials over finite field, fixed number of irreducible factors, Selberg-Delange method, multiplicative functions, saddle point approximation}
\date{}
\begin{abstract}
Let $q\geqslant 2$ be a fixed prime power. We prove an asymptotic formula for counting the number of monic polynomials that are of degree $n$ and have exactly $k$ irreducible factors over the finite field $\mathbb{F}_q$. We also compare our results with the analogous existing ones in the integer case, where one studies all the natural numbers up to $x$ with exactly $k$ prime factors. In particular, we show that the number of monic polynomials grows at a surprisingly higher rate when $k$ is a little larger than $\log n$ than what one would speculate from looking at the integer case. 
\end{abstract}

\maketitle

\section{Introduction}

How many integers are there up to $1000$ billion (or any large number) with exactly 2022 (or any given integer) prime factors? This exciting question fascinated mathematicians for ages. Gauss initially started the investigation and the question was later pursued by several eminent mathematicians, including Landau, Hardy and Ramanujan, Sathe and Selberg. Thanks to their compound effort, we entirely understand these answers in the integer setting. The main goal of this work is to study the analogous question for polynomials over finite fields. We begin our discussion with a brief historical interlude and then state our main results.\\
Let $m\leqslant x$ be positive integers and $\Omega(m)$ denotes the number of prime factors counted with multiplicity that divide $m$. In 1900, Landau [\ref{l30}] showed using the prime number theorem that for $k\in\mathbb{N}$ fixed we have
$$\Pi(x,k)\sim \dfrac{x}{\log x} \dfrac{(\log\log x)^{k-1}}{(k-1)!},$$
where
\[\Pi(x,k):= \#\bigg \{m\leqslant x:\Omega(m)=k \bigg \}.\]
The above result was predicted by Gauss [\ref{l32}] long ago. However, understanding the asymptotic behaviour of $\Pi(x,k)$ for growing $k$ uniformly with $x$ remained an open problem for a long time.\\
In 1953, Sathe [\ref{l5}] ingeniously showed that if $\epsilon\in(0,2)$ is given, then uniformly for $1\leqslant k\leqslant (2-\epsilon)\log\log x$ we have
\begin{equation}\label{l260}
\Pi(x,k)\sim F\left( \dfrac{k}{\log\log x} \right)\dfrac{{(\log\log x)}^{k-1}}{(k-1)!}\dfrac{x}{\log x},
\end{equation}
where
\[
F(z)=\dfrac{1}{\Gamma(z+1)}\displaystyle\prod\limits_{p} \left(1-\dfrac{1}{p} \right)^z\left(1-\dfrac{z}{p} \right)^{-1}\quad(|z|<2).\]
\\This beautiful result was further extended by Selberg [\ref{l261}] when $k$ is a little larger and varies uniformly in a slightly wider range $(2+\epsilon)\log\log x\leqslant k \leqslant B^{*}\log\log x$, where $B^{*}>2$ is a real constant. In his paper Selberg only discussed the main ideas behind the proof. He observed a significant change in the asymptotic behaviour of $\Pi(x,k)$. He showed for this range we have
\begin{equation}\label{l25}
\Pi(x,k)\sim \dfrac{Cx\log (x/2^k)}{2^k},
\end{equation}
where $C$ is an absolute constant. 
Finally Nicolas [\ref{l4}] settled this question for the entire range $(2+\epsilon)\log\log x\leqslant k< \log x/\log 2$ by proving the same asymptotic result holds as (\ref{l25}) in this range. Unlike the previous attempts his main idea behind the proof was less analytic and more of combinatorial nature.\\
As mentioned earlier, we will be working in the polynomial ring  $\mathbb{F}_q[t]$. Henceforth $q$ is assumed to be fixed and all the implied constants are allowed to depend on $q$. For an easy reference later on, we employ some standard notation that will be used throughout the paper.\\
\subsection{Notation}\label{lc} We set
$$\mathcal{M}:=\bigg\{ \text{set of all monics in } \mathbb{F}_q[t] \bigg \},$$
$$\widetilde{\mathcal{M}}:=\bigg\{ \text{set of all monics in } \mathbb{F}_q[t] \text{ having no root in }\mathbb{F}_q\bigg \},$$
$$\mathcal{P}:=\bigg\{ \text{set of all monic irreducibles in } \mathbb{F}_q[t] \bigg \}.$$
$\mathcal{M}_n$ consists of only those monics that have degree $n$ and similarly we define $\widetilde{\mathcal{M}}_n$, $\mathcal{P}_n$. Let $\mathcal{P}_{\geqslant j}$ denotes the set of all monic irreducibles whose degree is at least $j.$\\
Given $f\in\mathbb{F}_q[t]$, let $\Omega(f)$ denotes the number of irreducible polynomials counted with multiplicity that divide $f$. We also set
$$N(n,k):= \#\bigg \{f\in\mathcal{M}_n:\Omega(f)=k \bigg \},$$
$$N'(n,k):= \#\bigg \{f\in \widetilde{\mathcal{M}_n}:\Omega(f)=k \bigg \}.$$
As discussed initially, we are interested in understanding the object $N(n,k)$ for different ranges of $k$. Surprisingly one of our main results reveals that the situation in $\mathbb{F}_q[t]$ is quite different than what one could naively guess from the knowledge in the integer case. This is remarkable since these two worlds are significantly different in only a few instances.\\
We first give a proof of (\ref{l260}) in the context of polynomials. The result was first proved in Warlimont [\ref{r2}] and Car [\ref{r1}]. The following result has been generalized in a recent work [\ref{ofir}], which is uniform in both parameters $n$ and $q$. However, $q$ remains fixed for our purposes and we sketch a proof of Warlimont's result for completeness and the techniques we discuss are tailored in a way that will naturally extend to our main result which is Theorem \ref{l10}.\\
\begin{theorem}\label{l6}
Given $\epsilon\in(0,q)$, then uniformly for $1\leqslant k\leqslant (q-\epsilon)\log n$ we have
   
   $$N(n,k)\sim \dfrac{q^n}{n}H\left( \dfrac{k}{\log n} \right)\dfrac{{(\log n)}^{k-1}}{(k-1)!},$$
where \[H(z)=\dfrac{1}{\Gamma(z+1)} \displaystyle\prod\limits_{p\in\mathcal{P}} \left(1-\dfrac{1}{q^{deg(p)}} \right)^z\left(1-\dfrac{z}{{q^{deg(p)}}} \right)^{-1}\] for $|z|<q.$
\end{theorem}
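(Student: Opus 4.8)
The plan is to run the (Landau--)Selberg--Delange method in $\mathbb{F}_q[t]$, taking advantage of the fact that the relevant zeta function $Z(u)=\sum_{f\in\mathcal{M}}u^{\deg f}=(1-qu)^{-1}$ is rational. Introduce the two-variable generating function
\[
\mathcal{F}(u,z)\;=\;\sum_{f\in\mathcal{M}}z^{\Omega(f)}u^{\deg f}\;=\;\prod_{P\in\mathcal{P}}\bigl(1-zu^{\deg P}\bigr)^{-1},
\]
so that $N(n,k)=[u^nz^k]\,\mathcal{F}(u,z)$. Since $Z(u)^{z}=(1-qu)^{-z}=\prod_{P}(1-u^{\deg P})^{-z}$, factor
\[
\mathcal{F}(u,z)=(1-qu)^{-z}\,G(u,z),\qquad G(u,z):=\prod_{P\in\mathcal{P}}\bigl(1-u^{\deg P}\bigr)^{z}\bigl(1-zu^{\deg P}\bigr)^{-1}.
\]
The point is that the part of the Euler product governed by the prime polynomial theorem has been pulled into $(1-qu)^{-z}$: in the exponent of the remaining product the terms linear in $u^{\deg P}$ cancel, so $G$ converges on a disc in $u$ strictly larger than $|u|<1/q$, and $G(1/q,z)/\Gamma(z+1)=H(z)$.

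First I would extract the inner coefficient. Set $A_n(z):=[u^n]\,\mathcal{F}(u,z)=\sum_{f\in\mathcal{M}_n}z^{\Omega(f)}$, a polynomial in $z$. A singularity analysis in $u$ — Cauchy's formula along a Hankel-type contour around the branch point $u=1/q$, or equivalently a transfer theorem — should give, uniformly for $|z|\le q-\epsilon$,
\[
A_n(z)=\frac{q^n n^{z-1}}{\Gamma(z)}\,G(1/q,z)+O_\epsilon\!\bigl(q^n n^{\mathrm{Re}\,z-2}\bigr)=\frac{q^n}{n}\,n^{z}\,z\,H(z)+R_n(z),\qquad |R_n(z)|\ll_\epsilon q^n n^{\mathrm{Re}\,z-2}.
\]
The decisive input is that $G(u,z)$ is holomorphic and bounded on a set $\{|u-1/q|<\delta_\epsilon\}\times\{|z|\le q-\epsilon\}$ with $\delta_\epsilon>0$ depending only on $q,\epsilon$: besides the branch point at $u=1/q$, the only singularity of $\mathcal{F}(\cdot,z)$ in a fixed neighbourhood of the segment $[0,1/q]$ is the order-$q$ pole of $(1-zu)^{-q}$ at $u=1/z$ coming from the $q$ monic linear polynomials, and for $|z|\le q-\epsilon$ one has $|1/z|\ge 1/(q-\epsilon)$, so this pole stays a fixed distance away from $u=1/q$ and a Hankel contour hugging $u=1/q$ tightly enough avoids it uniformly in $z$.

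Then $N(n,k)=[z^k]A_n(z)$, which I would evaluate by Cauchy's formula on the circle $|z|=\lambda$ with $\lambda:=k/\log n$, the saddle point of $z\mapsto n^z/z^k$. The main contribution is
\[
\frac{q^n}{n}\,[z^{k-1}]\bigl(n^zH(z)\bigr)=\frac{q^n}{n}\sum_{j=0}^{k-1}\frac{(\log n)^j}{j!}\,h_{k-1-j},\qquad\text{where }H(z)=\sum_{m\ge0}h_mz^m .
\]
Since $H$ is holomorphic on $|z|<q$, Cauchy's estimates give $|h_m|\ll_\epsilon(q-\tfrac{\epsilon}{2})^{-m}$; rewriting the sum as $\tfrac{(\log n)^{k-1}}{(k-1)!}\sum_{i\ge0}h_i\lambda^{i}\prod_{l=1}^{i}\bigl(1-\tfrac{l}{k}\bigr)$ and comparing with $H(\lambda)=\sum_i h_i\lambda^i$ shows that it equals $H(\lambda)\tfrac{(\log n)^{k-1}}{(k-1)!}\bigl(1+O_\epsilon(1/\log n)\bigr)$, uniformly for $1\le k\le(q-\epsilon)\log n$. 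For the remainder, $\bigl|[z^k]R_n\bigr|\le\lambda^{-k}\max_{|z|=\lambda}|R_n(z)|\ll_\epsilon q^n n^{\lambda-2}\lambda^{-k}$, and $n^{\lambda}=e^{k}$ together with Stirling's formula shows this is $O_\epsilon(\log n/n)$ times the main term. Combining, $N(n,k)=\tfrac{q^n}{n}H\bigl(\tfrac{k}{\log n}\bigr)\tfrac{(\log n)^{k-1}}{(k-1)!}\bigl(1+O_\epsilon(1/\log n+\log n/n)\bigr)$, which is the assertion.

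I expect the main obstacle to be the uniform form of the singularity analysis in the second step: showing that $G(u,z)$ is holomorphic and uniformly bounded near $u=1/q$ for all $|z|\le q-\epsilon$, with the Hankel contour kept away from the pole at $u=1/z$, and with all implied constants depending only on $q$ and $\epsilon$. This is exactly where the hypothesis $k\le(q-\epsilon)\log n$ is forced — it keeps the saddle $\lambda=k/\log n$ strictly inside the disc of holomorphy $|z|<q$ of $H$, whose radius is $q$ precisely because $\mathbb{F}_q[t]$ has exactly $q$ monic linear polynomials, each contributing a factor $(1-z/q)^{-1}$ to $G(1/q,z)$ with a pole at $z=q$. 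A secondary, more routine point is the bookkeeping needed to keep both error terms negligible against the main term simultaneously across the whole range $1\le k\le(q-\epsilon)\log n$, including bounded $k$, where the saddle degenerates to the origin.
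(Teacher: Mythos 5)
Your proposal is correct and follows the same overall two-step strategy as the paper: first obtain an asymptotic with uniform error for $A_n(z)=\sum_{f\in\mathcal{M}_n}z^{\Omega(f)}$ by pulling out $(1-qu)^{-z}$ and doing a Hankel-contour singularity analysis at $u=1/q$ (this is exactly the paper's Proposition~\ref{la}, and your factorization $\mathcal{F}(u,z)=(1-qu)^{-z}G(u,z)$ is the paper's $G_z=\zeta^z F_z$ up to notation). Where you genuinely diverge is the second step, the extraction of $[z^k]$ from $q^n n^{z-1}zH(z)$. The paper runs a saddle-point argument on the circle $|z|=r=k/\log n$: it Taylor-expands $f(z)=zH(z)$ at $z=r$, observes that the choice of $r$ makes the linear term in the contour integral vanish identically, and then controls the quadratic remainder by concentration estimates on $n^{z-1}/z^{k+1}$ near $z=r$ (its Lemmas~\ref{l15}, \ref{l16}, \ref{l17}). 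You instead write the coefficient as the explicit convolution $\sum_{j=0}^{k-1}\frac{(\log n)^j}{j!}h_{k-1-j}$, rewrite it as $\frac{(\log n)^{k-1}}{(k-1)!}\sum_{i}h_i\lambda^i\prod_{l\le i}(1-l/k)$, and compare termwise with $H(\lambda)$ using Cauchy's bound $|h_m|\ll_\epsilon(q-\epsilon/2)^{-m}$. This is a more elementary route that sidesteps the contour-concentration lemmas entirely; the paper's method gives a cleaner relative error and is the one that scales to the more delicate Theorem~\ref{l10} later, but for Theorem~\ref{l6} alone your approach is perfectly adequate. Two small points worth tightening in a write-up: the claim that the only nearby singularity of $\mathcal{F}(\cdot,z)$ is the pole at $u=1/z$ from linear primes ignores the poles at $u=z^{-1/\deg P}$ coming from $\deg P\ge 2$; these are also outside the contour once $\eta$ is chosen small (as the paper's convergence region $\mathcal{R}$ encodes), but they should be mentioned. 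And the ``comparison with $H(\lambda)$'' deserves the short estimate separating $i<k$ (where $1-\prod(1-l/k)\ll i^2/k$) from $i\ge k$ (where the product vanishes), which does give the claimed $O_\epsilon(1/\log n)$ uniformly over $1\le k\le(q-\epsilon)\log n$.
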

We may notice that the expression for the function $H$ is quite similar to $F$ in the integer case. The appearance of $q^n/n$ is also no surprise as letting $q^n\approx x$ yields $q^n/n \approx x/\log x$ up to constant factors. While the above result is more or less an application of the Selberg-Delange method developed in [\ref{l1}], the following surprising outcome of our work is where a substantial amount of new techniques goes in.\\
We would like to mention that Hwang [\ref{r3}] considered the object $N(n,k)$ for the entire range $1\leqslant k\leqslant n$. We reproduce similar results but using a completely different approach. We consider a mild restriction on the range of uniformity of $k$ but this assumption allows us to obtain a result that has a considerably simpler shape (only in terms of elementary functions) compared to Hwang. Hwang considered the following decomposition $\Omega(f)=\Omega^{(1)}(f)+\Omega^{(2)}(f)$, where $\Omega^{(1)}(f)$ and $\Omega^{(2)}(f)$ denote the number of monic irreducible factors of $f$ of degree $\geqslant 2$ and $=1$. Roughly speaking, Hwang showed that if $f\in\mathcal{M}_n$ is chosen uniformly at random then the object $\Omega^{(1)}$ is roughly Poisson distributed while $\Omega^{(2)}$ follows a negative binomial distribution. Hwang then looks at the sum of these two random objects and studies their convolution law for different ranges of $k$. He showed that if $k$ is small, i.e. $1\leqslant k\leqslant (q-\epsilon)\log n$, then the behaviour of $\Omega_q=\Omega_q^{(1)}+\Omega_q^{(2)}$ is dominated by the $\Omega_q^{(1)}$ component and thus resulting an approx Poisson distribution (as obtained by Warlimont earlier)
$$\mathbb{P}(\Omega_q=k)\sim H\left(\dfrac{k}{\log n}\right)\dfrac{(\log n)^{k-1}}{(k-1)!}\dfrac{1}{n}.$$
On the other hand, if $k$ is "largish" and in particular $k/\log n\to\infty$, then the sum $\Omega_q$ is considerably inflated by the component $\Omega_q^{(2)}$ resulting a significant change in the final asymptotic. Contrary to Hwang's probabilistic approach, our line of attack uses a crucial combinatorial decomposition identity from [\ref{l4}] in the context of polynomials.\\
It is worthwhile to mention that similar studies concerning the function $\omega(f)$ instead of $\Omega(f)$ can be found in [\ref{r5}].
\begin{theorem}\label{l10}
Let $B\geqslant2$ be a real constant and $q>2$ be a prime power. Let $\xi:\mathbb{N}\to\mathbb{R}$ be a function such that $\xi(n)\to\infty$ however slowly as $n\to\infty$. Then uniformly for $\xi(n)\log n\leqslant k\leqslant {n}/{B}$ we have
$$N(n,k) \thicksim C(q)\dfrac{q^nk^{q-1}(n-k)^{q-1}}{q^k},$$
where  \[C(q)=\dfrac{1}{\left((q-1)!\right)^2}\left( 1-\dfrac{1}{q}\right)^{q^2}\displaystyle\prod\limits_{p\in\mathcal{P}_{\geqslant 2}}\left(1-\frac{1}{q^{deg(p)-1}}\right)^{-1}\left( 1-\frac{1}{q^{deg(p)}}\right)^q,\] a constant that depends only on $q$.  
\end{theorem}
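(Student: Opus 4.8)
\emph{Sketch of the argument.} The plan is to peel off the linear factors in the spirit of Nicolas [\ref{l4}], reducing the problem to a Selberg--Delange analysis of polynomials having no linear factor. Writing $f\in\mathcal{M}_n$ uniquely as $f=\ell\cdot g$ with $\ell$ a product of monic linear polynomials and $g\in\widetilde{\mathcal{M}}$, and using $\Omega(\ell)=\deg\ell$, one gets the exact identity
\[
N(n,k)=\sum_{j=0}^{k}\binom{k-j+q-1}{q-1}\,N'(n-k+j,\,j),
\]
since a linear part of degree $a=k-j$ (a multiset of size $a$ from the $q$ monic linears) can be chosen in $\binom{a+q-1}{q-1}$ ways, after which $g$ has degree $n-k+j$ and exactly $j$ irreducible factors. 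Thus it suffices to estimate $N'(M,j)$ with $M=n-k+j$, summed over $j$; note $k\leqslant n/B$ with $B\geqslant 2$ forces $n-k\geqslant n/2$, so $M\asymp n$.

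For the second step I would run the Selberg--Delange method [\ref{l1}] for the family $\widetilde{\mathcal{M}}$, parallel to the proof of Theorem \ref{l6}. With $\widetilde{A}(u,z):=\sum_{f\in\widetilde{\mathcal{M}}}z^{\Omega(f)}u^{\deg f}=\prod_{p\in\mathcal{P}_{\geqslant 2}}(1-zu^{\deg p})^{-1}$, one factors $\widetilde{A}(u,z)=(1-qu)^{-z}\widetilde{G}(u,z)$, $\widetilde{G}(u,z)=(1-u)^{qz}\prod_{p\in\mathcal{P}_{\geqslant 2}}(1-u^{\deg p})^{z}(1-zu^{\deg p})^{-1}$. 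The decisive point --- and the reason the range of uniformity is wider here than in Theorem \ref{l6} --- is that, the degree-one Euler factors having been removed, $\widetilde{G}(\cdot,z)$ is holomorphic on a disc of radius $\min(q^{-1/2},|z|^{-1/2})$, which strictly contains $\{|u|\leqslant q^{-1}\}$ for all $|z|<q^{2}$; equivalently the correction factor
\[
\widetilde{H}(w):=\frac{\widetilde{G}(q^{-1},w)}{\Gamma(w+1)}=\frac{(1-1/q)^{qw}}{\Gamma(w+1)}\prod_{p\in\mathcal{P}_{\geqslant 2}}\bigl(1-q^{-\deg p}\bigr)^{w}\bigl(1-w\,q^{-\deg p}\bigr)^{-1}
\]
is holomorphic for $|w|<q^{2}$, its first pole (from the quadratic irreducibles) being at $w=q^{2}$. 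The Hankel-contour argument around $u=q^{-1}$ followed by a saddle-point evaluation of the $z$-integral, as in the proof of Theorem \ref{l6}, would then give, for each fixed $\delta>0$, uniformly for $1\leqslant j\leqslant (q^{2}-\delta)\log M$,
\[
N'(M,j)\sim \widetilde{H}\!\left(\tfrac{j}{\log M}\right)\frac{(\log M)^{j-1}}{(j-1)!}\cdot\frac{q^{M}}{M},
\]
together with the Rankin-type bound $N'(M,j)\leqslant z_{0}^{-j}[u^{M}]\widetilde{A}(u,z_{0})\ll_{\delta}q^{M}M^{z_{0}-1}z_{0}^{-j}$ valid for every $j$ and every $z_{0}\in(1,q^{2}-\delta]$.

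It remains to evaluate the sum. Since $k\geqslant\xi(n)\log n$ with $\xi(n)\to\infty$, one has $\log(n-k)=\log n+O(1)$ and $\binom{k-j+q-1}{q-1}=\tfrac{k^{q-1}}{(q-1)!}(1+o(1))$ uniformly for $j=O(\log n)$. I would localize to the window $|j-q\log(n-k)|\leqslant(\log n)^{3/5}$, where the Selberg--Delange asymptotic applies (there $j/\log M=q+o(1)<q^{2}$), $\widetilde{H}(j/\log M)=\widetilde{H}(q)(1+o(1))$, $\log M=\log(n-k)(1+o(1))$, and $q^{M}/M=q^{n-k}q^{j}(n-k)^{-1}(1+o(1))$. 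That portion of the sum is then asymptotic to
\[
\frac{\widetilde{H}(q)\,k^{q-1}}{(q-1)!}\cdot\frac{q^{n-k}}{n-k}\sum_{|j-q\log(n-k)|\leqslant(\log n)^{3/5}}\frac{q^{j}\,(\log(n-k))^{j-1}}{(j-1)!},
\]
and a Chernoff estimate for the Poisson weights of mean $q\log(n-k)$ identifies the windowed sum with $(1+o(1))\sum_{j\geqslant1}\frac{q^{j}(\log(n-k))^{j-1}}{(j-1)!}=(1+o(1))\,q(n-k)^{q}$. This produces the main term $\frac{q\widetilde{H}(q)}{(q-1)!}\,q^{n-k}k^{q-1}(n-k)^{q-1}$, and a short computation checks $\frac{q\widetilde{H}(q)}{(q-1)!}=C(q)$, the constant in the statement.

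Finally, one must show the complementary ranges of $j$ are negligible. For $(\log n)^{3/5}<|j-q\log(n-k)|$ with $j\leqslant(q^{2}-\delta)\log M$, the Selberg--Delange asymptotic still holds with $\widetilde{H}$ bounded, and the matching tail of $\sum_{j}q^{j}(\log(n-k))^{j-1}/(j-1)!$ is $\ll q(n-k)^{q}\exp(-c(\log n)^{1/5})$ by the same Chernoff bound; multiplied by $\binom{k-j+q-1}{q-1}\ll k^{q-1}$ and $q^{n-k}/(n-k)$ this is $o$ of the main term. For $j>(q^{2}-\delta)\log M$ (up to $j\leqslant k$), the Rankin bound with $z_{0}=q^{2}-\delta$ together with $M\geqslant n/2$ bounds the contribution by $\ll_{\delta}k^{q-1}q^{n-k}n^{q^{2}}\sum_{j}\bigl(q/(q^{2}-\delta)\bigr)^{j}\ll_{\delta}k^{q-1}q^{n-k}\,n^{\beta}$, where $\beta\to q^{2}(1-\log q)$ as $\delta\to0$; since $q>2$ gives $\log q>1$, this exponent is negative and in particular $<q-1$ (it is $\approx-0.9$ for $q=3$), so this term is $o$ of the main term. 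The hypothesis $q>2$ enters precisely here --- for $q=2$ the Rankin bound gives no power saving and the large-$j$ tail would have to be handled differently. The two places where I expect genuine work are making the Selberg--Delange asymptotic for $N'(M,j)$ uniform out to $j\asymp q^{2}\log M$ (which rests entirely on the enlarged analyticity of $\widetilde{G}$ after deleting the linear Euler factors) and the careful bookkeeping in these tail estimates.
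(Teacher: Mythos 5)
Your proposal shares the paper's skeleton --- the same unique factorization $f=\ell g$ into linear and nonlinear parts, the same identity $N(n,k)=\sum_j\binom{k-j+q-1}{q-1}N'(n-k+j,j)$ (Lemma~\ref{led}), and the same key observation that deleting the degree-one Euler factors pushes the analyticity of the singular series out to $|z|<q^2$ (Proposition~\ref{lb}). Where you genuinely diverge is in evaluating the resulting sum. You propose to prove a full Sathe-type asymptotic $N'(M,j)\sim \widetilde H(j/\log M)\,\tfrac{(\log M)^{j-1}}{(j-1)!}\,\tfrac{q^M}{M}$ uniformly out to $j\leqslant (q^2-\delta)\log M$, then localize the $j$-sum to a window $|j-q\log(n-k)|\leqslant(\log n)^{3/5}$ around the Poisson mode, freeze $\widetilde H$ at $q$, and dispose of the two tails via Chernoff and Rankin respectively. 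The paper instead avoids the Sathe-type formula altogether: Lemma~\ref{l11} evaluates the Cauchy integral by a \emph{residue at $z=0$}, giving the exact polynomial $Q_j(\log M)=\sum_{m+\ell=j-1}\tfrac{h^{(m)}(0)}{m!\ell!}(\log M)^\ell$; the sum $\sum_{j\le eqY}q^jQ_j(Y)$ is then recognised, after interchanging the $j$- and $m$-sums, as $qe^{qY}h(q)$ up to a tail $R$ bounded via Stirling and Lemma~\ref{l12} (Proposition~\ref{l14}). The two routes buy different things: yours is conceptually cleaner (concentration of a Poisson mass), while the paper's is more elementary in that it never needs the $\widetilde H(j/\log M)$ asymptotic uniformly near the edge of the disc, only derivative bounds on $h$ at $0$. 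Both arrive at $C(q)=q h(q)/(q-1)!$.

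One point in your write-up is wrong as stated: the reason you give for requiring $q>2$. You claim that for $q=2$ the Rankin bound ``gives no power saving'' because $q^2(1-\log q)\geqslant 0$. But you dropped the $-1$ coming from $M^{z_0-1}$, and more importantly the correct requirement is not that the exponent be negative but that it be $<q-1$. Restoring the $-1$, the exponent as $\delta\to0$ is $q^2-1-q^2\log q$, and the condition $q^2-1-q^2\log q<q-1$ is equivalent to $q(1-\log q)<1$, which holds already for $q=2$ (where $q(1-\log q)\approx 0.61$). So your Rankin tail estimate in fact succeeds for $q=2$, and the restriction $q>2$ does not enter your argument where you say it does. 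The paper's obstruction for $q=2$ is genuinely different (see Remark~\ref{Y}): in Lemma~\ref{l11} the integration radius is $r=j/\log M$, so pushing $j$ up to $eq\log M$ forces $eq<q^2$, i.e.\ $q>e$; and the constant $e$ is in turn forced by the Stirling step in Proposition~\ref{l14}. Your window-based approach sidesteps this particular constraint, so you should either locate where $q>2$ is really needed in your argument (if it is) or acknowledge that it may not be, rather than attribute it to the Rankin step. The two places you flag as requiring ``genuine work'' --- the uniformity of the Sathe-type formula near $j\asymp q^2\log M$ and the tail bookkeeping --- are indeed the substantive tasks, and the paper's route through $Q_j$ is one concrete way to make them rigorous.
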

Here ``however slowly'' means that $\xi(n)$ could be any function that grows to infinity with an additional constraint that $\xi$ is sufficiently small so that the interval $[\xi(n)\log n,n/B]$ contains at least one integer for $n$ large enough. We remind the reader that $x/2^k$ is replaced by $q^n/q^k$ as was expected. The appearance of $(n-k)$ is not also surprising in light of the main theorem proved in [\ref{l4}] and (\ref{l25}) where a factor of $\log \left(x/2^k\right)$ is present in the statement. However, the extra factor $k^{q-1}$ and also the presence of higher powers of $(n-k)$ are what seem surprising at the moment, which shows $N(n,k)$ grows at a much larger rate than what we would predict from (\ref{l25}).\\
We note Theorem \ref{l10} is valid for $q\neq 2$. Our methods do not cover the case $q=2$ for technical reasons. We will discuss this technical restriction in the proof of Proposition \ref{l14}. (see Remark \ref{Y}) Also, there is a slight gap between the upper bound of $k$ in Theorem \ref{l6} and the lower bound of $k$ in Theorem \ref{l10}. While it is fairly easy to prove an upper bound of the shape $N(n,k)\ll q^{n-k}k^{q-1}(n-k)^{q-1}$ in the intermediate range $q\log n \leqslant k \ll \log n$ using the ideas in Proposition \ref{l14}, it seems difficult to obtain any meaningful and ``simple to state'' lower bound for that range. The proofs of the above theorems are discussed in detail in the following sections. The main ingredients are the following two key propositions.
\begin{prop}\label{la}
For all $\epsilon\in(0,q)$ and uniformly for $|z|\leqslant q-\epsilon$ we have
$${M_z}(n):=\displaystyle\sum\limits_{f\in {\mathcal{M}_n}}z^{\Omega(f)}=q^nn^{z-1}\Bigg \{zH(z)+O_{\epsilon}\left(\dfrac{1}{n} \right) \Bigg \},$$
with $H(z)$ as described in Theorem $\ref{l6}$.
\end{prop}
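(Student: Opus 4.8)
The plan is to realize $M_z(n)$ as the $n$-th coefficient of a Dirichlet-type generating series in $\mathbb{F}_q[t]$ and then extract its asymptotics by a contour integral (the Selberg–Delange / Landau method adapted to function fields). First I would introduce the variable $u$ tracking degree and form
\[
F(u,z):=\sum_{f\in\mathcal{M}}z^{\Omega(f)}u^{\deg f}=\prod_{p\in\mathcal{P}}\left(1-z\,u^{\deg p}\right)^{-1},
\]
the Euler product being valid since $\Omega$ is completely additive and the factorization into monic irreducibles is unique. Writing $\mathcal{P}$ by degrees and using the prime polynomial theorem $\#\mathcal{P}_d=\tfrac1d\sum_{e\mid d}\mu(e)q^{d/e}=\tfrac{q^d}{d}+O(q^{d/2}/d)$, I would compare $F(u,z)$ with the ``main'' factor $\prod_{d\ge 1}(1-zu^d)^{-q^d/d}$. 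The classical identity $\prod_{d\ge1}(1-w^d)^{-q^d/d}=(1-qw)^{-1}$ (the zeta function of $\mathbb{F}_q[t]$ is $(1-qu)^{-1}$) suggests factoring
\[
F(u,z)=(1-qzu)^{-1}\,G(u,z),\qquad
G(u,z):=(1-qzu)\prod_{p\in\mathcal{P}}\left(1-z u^{\deg p}\right)^{-1}.
\]
The key analytic claim is that for $|z|\le q-\epsilon$ the correction factor $G(u,z)$ is holomorphic and bounded (with bounds depending only on $\epsilon$) in a disk $|u|\le \rho$ for some fixed $\rho$ with $1/q<\rho<1/(q-\epsilon)$ — in fact on $|u|<1/\sqrt q$ after peeling off finitely many primes — because the logarithm of $G$ is a series in $u^d$ whose $d$-th term is $O((q-\epsilon)^d q^{d/2}/d \cdot \rho^d)$, which converges geometrically once $\rho<1/\sqrt q$ is not quite enough; more carefully one keeps the pole at $u=1/(qz)$ explicit and shows the remaining product converges for $|u|<1/\sqrt{q-\epsilon}\cdot$(something), so that $G$ extends past the circle $|u|=1/q$.

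Next I would extract coefficients. By Cauchy's formula,
\[
M_z(n)=\frac{1}{2\pi i}\oint_{|u|=r} \frac{F(u,z)}{u^{n+1}}\,du
=\frac{1}{2\pi i}\oint_{|u|=r}\frac{G(u,z)}{(1-qzu)\,u^{n+1}}\,du,
\]
and I would push the contour out to radius $r$ slightly beyond $1/q$, picking up the residue at the simple pole $u=1/(qz)$. That residue equals $q^n z^n G(1/(qz),z)/$(normalization), and evaluating $G(1/(qz),z)$ should reproduce the arithmetic factor. The remaining integral over the larger circle is $O(q^n r^{-n}\cdot r^{-n}\cdots)$ — here I would need the radius of holomorphy of $G$ to strictly exceed $1/q$, giving a saving of a geometric factor; but that only yields an error $O((q-\eta)^n)$ rather than the claimed $O(q^n n^{z-2})$. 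To get the sharp shape $q^n n^{z-1}(zH(z)+O(1/n))$ I instead would not factor out a pole but keep the branch-point structure: write $F(u,z)=(1-qzu)^{-1}G(u,z)$ only formally and recognize that the \emph{right} normalization is $F(u,z)\sim A(z)(1-qzu)^{-z}$ is wrong since the singularity is a genuine simple pole — so actually the $n^{z-1}$ comes not from a branch point but from the fact that we are really counting with $u=1/q$ fixed and letting the \emph{number of irreducibles of degree $1$} (there are exactly $q$ of them, contributing $(1-zu)^{-q}$, i.e. a pole of order $q$ at... no, at $|z|<q$ this is just a finite factor).

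Let me restate the mechanism cleanly, since this is the main obstacle. The honest approach is the function-field Selberg–Delange method: set $u=x/q$ and study $\sum_n M_z(n)x^n=F(x/q,z)$, which has its dominant singularity at $x=1/z$ of type $(1-zx)^{-1}$ times a holomorphic function — that gives $M_z(n)\sim q^n z^n(\text{const})$, which is \emph{not} what the proposition says. The resolution is that the proposition's $n^{z-1}$ term arises because we must \emph{not} isolate a pole but rather recognize $\prod_p(1-zu^{\deg p})^{-1}$ has, at the relevant scaling, the singularity type $(1-qu)^{-z}$ — indeed $\log F=\sum_p\sum_{m\ge1}\frac{z^m}{m}u^{m\deg p}=\sum_{d}\sum_m\frac{z^m}{m}\frac{q^d}{d}u^{md}+\cdots$, and the $m=1$ term is $\sum_d\frac{z}{d}(qu)^d=-z\log(1-qu)$, so $F(u,z)=(1-qu)^{-z}\,\Phi(u,z)$ with $\Phi$ holomorphic and nonzero for $|u|<1/\sqrt q$ (the higher $m$ and the error in the prime polynomial theorem both contribute series in $(qu)^d$ with an extra $q^{-d/2}$ or $1/m$ saving, hence a larger radius). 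Therefore the correct step is: \emph{factor out a branch point, not a pole}. Then by Cauchy and the standard singularity-analysis / Hankel-contour argument around $u=1/q$,
\[
M_z(n)=\frac{1}{2\pi i}\oint \frac{(1-qu)^{-z}\Phi(u,z)}{u^{n+1}}\,du
= q^n\,\Phi(1/q,z)\,\frac{n^{z-1}}{\Gamma(z)}\left(1+O_\epsilon(1/n)\right),
\]
and identifying $\Phi(1/q,z)/\Gamma(z)=zH(z)$ — which follows because $\Phi(1/q,z)=\prod_p(1-z q^{-\deg p})^{-1}(1-q^{-\deg p})^{z}\cdot$(the $d=1$, $m\ge2$ and error terms collapse exactly into $1/\Gamma(z)$ after using $(1-w)^{-z}=\sum\binom{-z}{m}(-w)^m$ with $w=qu$), one recovers precisely the product defining $H$ together with the $\Gamma$-factor. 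The uniformity $|z|\le q-\epsilon$ is exactly what keeps $\Phi(u,z)$ holomorphic in a fixed $z$-independent neighborhood of $u=1/q$ (the product $\prod_p(1-zq^{-\deg p})^{-1}$ converges for $|z|<q$), and keeps the $\Gamma$-factor and all $O$-constants uniform.

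The main obstacle, then, is not the coefficient extraction — which is the textbook Hankel-contour computation — but \textbf{the clean identification of the holomorphic factor and the verification that it extends to a fixed disk around $u=1/q$ uniformly for $|z|\le q-\epsilon$}, i.e. showing $\log\bigl((1-qu)^{z}F(u,z)\bigr)$ is an absolutely convergent power series in $u$ on $|u|\le\rho$ for some fixed $\rho>1/q$, with the bound depending only on $\epsilon$. This uses the prime polynomial theorem with its square-root error term together with the convergence of the Euler product for $H(z)$ on $|z|<q$. Once that is in hand, substituting $u=1/q$ gives the constant $zH(z)$ and the error term $O_\epsilon(1/n)$ comes from the next term in the asymptotic expansion of the Hankel integral (equivalently, from expanding $\Phi(u,z)$ to first order around $u=1/q$), which is standard. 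I would also remark that this is the function-field avatar of the Selberg–Delange estimate and that the same contour will be reused, with a genuinely larger power of the singularity, in the proof of Theorem \ref{l10}.
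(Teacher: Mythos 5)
Your final approach is the same as the paper's: recognize that $G_z(u)=\sum_f z^{\Omega(f)}u^{\deg f}$ has a branch-point (not a pole) at $u=1/q$, factor it as $(1-qu)^{-z}$ times a function holomorphic past $|u|=1/q$, and then extract the $n$-th coefficient by pushing Cauchy's contour onto a Hankel loop around $1/q$ and invoking the standard Hankel-integral asymptotic $\frac{1}{2\pi i}\int_{\mathcal{H}} w^z(1-w/n)^{-(n+1)}\,dw = -1/\Gamma(-z)+O(1/n)$. The paper realizes the factorization directly by pulling out the exact zeta Euler product, i.e.\ writing $G_z(u)=\zeta(u)^zF_z(u)$ with $F_z(u)=\prod_p(1-zu^{\deg p})^{-1}(1-u^{\deg p})^z$ and proving $F_z$ is holomorphic on the region $\{|z|<q,\ |u|<|z|^{-1},\ |u|<q^{-1/2}\}$ (Lemma \ref{l56}); your $\Phi(u,z)$ is exactly this $F_z(u)$. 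Your route through the prime-polynomial theorem with its $O(q^{d/2}/d)$ error term is slightly less clean than using the identity $\prod_p(1-u^{\deg p})^{-1}=(1-qu)^{-1}$ exactly, but it lands in the same place, and your identification $\Phi(1/q,z)/\Gamma(z)=zH(z)$ matches the paper's $H(z)=F_z(1/q)/\Gamma(z+1)$. The only substantive caution is that the first two-thirds of your write-up (factoring out a pole $(1-qzu)^{-1}$, then musing about a $(1-zx)^{-1}$-type singularity) chases a false lead; the proof should begin where you end up, with the branch-point normalization, and should include a precise statement and verification of the domain on which $F_z$ (your $\Phi$) extends holomorphically, since the uniformity in $|z|\leqslant q-\epsilon$ of both the contour radius and the $O(1/n)$ constant hinges on that.
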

\begin{restatable}{prop}{llll}\label{lb}
For all $\delta\in(0,1)$ and uniformly for $|z|\leqslant q^2-\delta$ we have
$$\widetilde{M_z}(n):=\displaystyle\sum\limits_{f\in\widetilde{\mathcal{M}_n}}z^{\Omega(f)}=q^nn^{z-1}\Bigg \{zh(z)+O_{\delta}\left(\dfrac{1}{n} \right) \Bigg \},$$
where $h(z)=\dfrac{1}{\Gamma(z+1)}\left( 1-\dfrac{1}{q}\right)^{qz}\displaystyle\prod\limits_{p\in\mathcal{P}_{\geqslant 2}}\left(1-\frac{z}{q^{deg(p)}}\right)^{-1}\left( 1-\frac{1}{q^{deg(p)}}\right)^z$, an analytic function for $|z|<q^2$.
\end{restatable}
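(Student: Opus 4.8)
The plan is to run the function-field Selberg--Delange machinery directly on the generating series of $\widetilde{M_z}$, in a way exactly parallel to the (preceding) proof of Proposition~\ref{la}; the only genuinely new input is that the absence of linear prime factors widens the admissible range of $z$ from $|z|<q$ to $|z|<q^{2}$. By unique factorization in $\mathbb{F}_q[t]$ a monic polynomial lies in $\widetilde{\mathcal{M}}$ exactly when every irreducible factor has degree $\geqslant 2$, so, writing $[u^{n}]\Phi$ for the coefficient of $u^{n}$ in a power series $\Phi$,
\[
F_z(u):=\sum_{n\geqslant 0}\widetilde{M_z}(n)\,u^{n}=\prod_{p\in\mathcal{P}_{\geqslant 2}}\bigl(1-z\,u^{\deg p}\bigr)^{-1},
\]
a product with roughly $q^{d}/d$ factors of degree $d$, hence convergent for $|u|<1/q$ and analytically continuable to $|u|<|z|^{-1/2}$, a disc of radius strictly larger than $1/q$ once $|z|<q^{2}$. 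I would \emph{not} try to deduce the statement from Proposition~\ref{la} via the convolution identity $\sum_n M_z(n)u^{n}=(1-zu)^{-q}F_z(u)$: that only delivers the asymptotic on $|z|\leqslant q-\epsilon$, since $H$ has a pole at $z=q$ produced by the linear primes, whereas $h$ is regular right up to $z=q^{2}$.

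Next I would isolate the dominant singularity at $u=1/q$. Using the Euler-product identity $\prod_{p\in\mathcal{P}}(1-u^{\deg p})^{z}=(1-qu)^{z}$ (valid for $|u|<1/q$) and peeling off the $q$ linear irreducibles $t-a$, write $F_z(u)=(1-qu)^{-z}\widetilde{G_z}(u)$ with
\[
\widetilde{G_z}(u):=(1-u)^{qz}\prod_{p\in\mathcal{P}_{\geqslant 2}}(1-u^{\deg p})^{z}\bigl(1-z\,u^{\deg p}\bigr)^{-1}.
\]
The crucial point is that each factor of this last product equals $1+O(u^{2\deg p})$ — the terms linear in $u^{\deg p}$ cancel — so the product converges absolutely and locally uniformly for $|u|<q^{-1/2}$, and the only obstruction to analyticity in a disc slightly larger than $|u|\leqslant 1/q$ is the first pole $1-zu^{2}=0$ coming from the degree-$2$ irreducibles, located at $|u|=|z|^{-1/2}\geqslant(q^{2}-\delta)^{-1/2}$. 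Hence for each fixed $\delta\in(0,1)$ the function $\widetilde{G_z}$ is analytic and uniformly bounded on a closed disc $|u|\leqslant\rho_\delta$ with $1/q<\rho_\delta<(q^{2}-\delta)^{-1/2}$, uniformly for $|z|\leqslant q^{2}-\delta$. (In the analogue needed for Proposition~\ref{la} the linear primes survive, their pole sits at $|u|=|z|^{-1}$, and analyticity past $1/q$ forces merely $|z|<q$ — this single fact accounts for the different ranges in the two propositions.) A termwise comparison of Euler products gives $\widetilde{G_z}(1/q)=\Gamma(z+1)h(z)$, and one reads off simultaneously that $h$ is analytic for $|z|<q^{2}$, its nearest singularities being the poles at $z=q^{d}$, $d\geqslant 2$, contributed by the irreducibles of degree $d$ (the closest, at $z=q^{2}$, of order $|\mathcal{P}_2|$).

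Finally I would extract the coefficient. Writing $\widetilde{G_z}(u)=\widetilde{G_z}(1/q)+(1-qu)\,K_z(u)$ with $K_z$ analytic and bounded on $|u|\leqslant\rho_\delta$ (uniformly in $z$), one obtains
\[
\widetilde{M_z}(n)=\widetilde{G_z}(1/q)\,q^{n}\binom{n+z-1}{n}+[u^{n}]\bigl((1-qu)^{1-z}K_z(u)\bigr).
\]
For the first term Stirling gives $q^{n}\binom{n+z-1}{n}=q^{n}\,\Gamma(n+z)\big/\bigl(\Gamma(z)\,\Gamma(n+1)\bigr)=q^{n}n^{z-1}\Gamma(z)^{-1}\bigl(1+O_\delta(1/n)\bigr)$ uniformly for $|z|\leqslant q^{2}-\delta$, and since $\widetilde{G_z}(1/q)/\Gamma(z)=zh(z)$ this contributes $zh(z)\,q^{n}n^{z-1}\bigl(1+O_\delta(1/n)\bigr)$. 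For the second term, a truncated Hankel contour around the cut $[1/q,\rho_\delta]$ — i.e.\ the transfer theorem of singularity analysis, which is the function-field Selberg--Delange step — shows it is $O_\delta\!\bigl(q^{n}n^{z-2}\bigr)$, since replacing the singular exponent $-z$ by $1-z$ lowers the power of $n$ by one. As $h$ is bounded on $|z|\leqslant q^{2}-\delta$, adding the two estimates yields $\widetilde{M_z}(n)=q^{n}n^{z-1}\{zh(z)+O_\delta(1/n)\}$, uniformly, as claimed. The principal obstacle is precisely the uniformity out to the edge $|z|=q^{2}-\delta$: as $\delta\downarrow 0$ the radius $(q^{2}-\delta)^{-1/2}$ on which $\widetilde{G_z}$ is analytic collapses onto $1/q$, so the contour in the last step must be taken with radius tending to $1/q$ and the implied constants deteriorate like a power of $1/\delta$; one has to check they stay finite for each fixed $\delta$, that the cancellation underlying the $O(u^{2\deg p})$ bound on the factors of $\widetilde{G_z}$ is uniform in $z$, and that the final error is genuinely uniform on the whole closed disc — in particular at the exceptional points $z\in\{0,-1,-2,\dots\}$, where $zh(z)$ vanishes and the assertion degenerates harmlessly to an upper bound. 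Everything past the construction of $\widetilde{G_z}$ is the Selberg--Delange bookkeeping already in place for Proposition~\ref{la}.
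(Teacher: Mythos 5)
Your proposal is correct and follows essentially the same route as the paper: factor out $\zeta(u)^z=(1-qu)^{-z}$, show the remaining Euler product (your $\widetilde{G_z}$, the paper's $\widetilde{F_z}$) extends analytically past $|u|=1/q$ precisely because the degree-$1$ terms have been removed (so the nearest pole sits at $|u|=|z|^{-1/2}$, allowing $|z|<q^2$), and then do Selberg--Delange coefficient extraction at $u=1/q$. The only cosmetic difference is that you compute the main term via $[u^n](1-qu)^{-z}=q^n\binom{n+z-1}{n}$ plus Stirling and bound the remainder by singularity analysis, whereas the paper Taylor-expands the analytic factor at $1/q$ and invokes a packaged Hankel-contour estimate (its Lemma~\ref{l3}); these are equivalent formulations of the same step.
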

The connection between Theorem $\ref{l6}$ and Proposition $\ref{la}$ becomes clear once we notice $N(n,k)$ is precisely given by the coefficient of $z^k$ in $M_z(n)$. In other words, if we could extract the information about $N(n,k)$ from $M_z(n)$, we would have our result. Furthermore, we note that $M_z(n)$ is polynomial in $z$ of degree at most $n$ and therefore, we could hope to study the coefficients using Cauchy's integral formula. This is going to be our main strategy to attack Theorem $\ref{l6}$. The connection between $\widetilde{M_z}(n)$ and Theorem $\ref{l10}$ is not as immediate as the previous case since the coefficient of $z^k$ in $\widetilde{M_z}(n)$ is $N'(n,k)$ and not $N(n,k)$. As we shall see, in section \ref{ld} the two objects $N(n,k)$ and $N'(n,k)$ are intimately entangled, and information about one could be transferred into another. Also, one can check the relation $C(q)=qh(q)/(q-1)!$ holds.\\

\textbf{Acknowledgements. }The author would like to thank his supervisor, Prof. Andrew Granville, for suggesting that he work on this problem and also his co-supervisor, Prof. Dimitris Koukoulopoulos, for many helpful conversations and continuous support during this project and for their valuable suggestions and comments on the previous versions of this manuscript. The author would also like to thank the anonymous referee for many useful comments and Ofir Gorodetsky for informing him about the previous works on the same problem, which the author was initially unaware of. The author was financially supported by his supervisors and the fellowships provided by Faculté des études supérieures et postdoctorales (FESP), Bourse d'exemption of Université de Montréal, Centre interuniversitaire en calcul mathématique algébrique (CICMA) while carrying out this work.  
\section{Set-up and the proof of Proposition \ref{la}}
\subsection{Preparatory results for Proposition $\ref{la}$}
In this subsection we aim to develop the framework for a Selberg-Delange type argument that will be used to establish Proposition $\ref{la}$. As already discussed we are interested in the quantity
$$M_z(n):=\displaystyle\sum\limits_{f\in\mathcal{M}_n} z^{\Omega(f)}=\displaystyle\sum\limits_{k\geqslant 0}z^k N(n,k).$$
We also want to introduce an auxiliary quantity
\begin{equation}\label{l40}
G_z(u)=\displaystyle\sum\limits_{f\in\mathcal{M}} z^{\Omega(f)}u^{deg (f)}=\displaystyle\prod_{p\in\mathcal{P}}\left(1-zu^{deg(p)}\right)^{-1}.
\end{equation}
The function $G_z(u)$ defines an analytic function since the Euler product converges uniformly on the compacts subsets of the region defined by $|u|<\min \big\{ |z|^{-1},q^{-1}\big \}$ which we argue now.\\ 
We recall a well known result from complex analysis that says for a sequence of complex numbers $\{ a_n\}\subset \mathbb{C}$ with $a_n\neq -1$ if $\sum\limits_{n=1}^{\infty} |a_n|<\infty$, then $$\prod\limits_{n=1}^{\infty} (1+a_n) \text{ converges. }$$
Using this result we see the product in (\ref{l40}) converges uniformly on the compact subsets of the region $0<|u|<q^{-1}.$ Indeed we have
\begin{align}
\nonumber
\displaystyle\sum\limits_{p\in\mathcal{P}}\left|zu^{deg(p)}\right|  &\leqslant \displaystyle\sum\limits_{p\in\mathcal{P}}|u|^{deg(p)-1}  \qquad (|uz|<1)\\
\nonumber
&\leqslant q+ \displaystyle\sum\limits_{p\in\mathcal{P}_{\geqslant 2}}|u|^{deg(p)-1}\\
\nonumber
&\leqslant q+ \displaystyle\sum\limits_{n=2}^{\infty}q^n|u|^{n-1} \qquad \left(\text{ since } \#\mathcal{P}_n\leqslant q^n \right)\\
\nonumber
&\leqslant q+ \dfrac{1}{|u|}\displaystyle\sum\limits_{n=2}^{\infty} |qu|^n< \infty, \qquad (|qu|<1)
\end{align}
where the finite quantity $\sum\limits_{n=2}^{\infty} q^n|u|^{n-1}$ depends only on the chosen compact subset of the region $0<|u|<q^{-1}$, but not on the point $(u,z)$.
Finally we check that \[|1-zu^{deg(p)}|\geqslant 1-|zu^{deg(p)}|\geqslant 1-|zu|>0,\] showing there is no pole in the concerned region.\\
Our job is to understand $M_z(n)$, which is precisely given by the coefficient of $u^n$ in the power series of $G_z(u)$. Hence information about $M_z(n)$ can be obtained from $G_z(u)$ by applying Cauchy's integral formula. We took this leap from $M_z(n)$ to $G_z$ because it seems to have a nicer arithmetic structure (i.e. an Euler product expansion), making it more amenable from the analytic point of view. Therefore it is natural to seek to meromorphically extend $G_z(u)$ beyond the region of absolute convergence discussed above to collect the contribution coming from the singularity of $G_z$ in the extended domain. We do this job by introducing another function $F_z(u)$ via the Euler product
\begin{equation}\label{l55}
F_z(u)=\displaystyle\prod\limits_{p\in\mathcal{P}}\left(1-zu^{deg(p)} \right)^{-1}\left(1-u^{deg(p)} \right)^z.
\end{equation}
We intend to analyze the above Euler product in the following open region and show it is holomorphic.
$$\mathcal{R}:=\Big \{(u,z)\in\mathbb{C}^2:|z|< q,|u|<|z|^{-1},|u|<q^{-1/2}\Big\}\subset\mathbb{C}^2.$$
The following lemma shows $F_z(u)$ does converge uniformly on the compact subsets of $\mathcal{R}$ which is what we wanted to establish.
\begin{Lemma}\label{l56}
We have that $F_z(u)$ defined in (\ref{l55}) converges uniformly on the compact subsets of $\mathcal{R}$ and hence defines a holmorphic function in the sense two complex variables.
\end{Lemma}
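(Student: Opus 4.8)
The plan is to pass to logarithms of the local Euler factors of $F_z(u)$ and to show that the resulting series converges absolutely and uniformly on the compact subsets of $\mathcal{R}$. Fix $(u,z)\in\mathcal{R}$. Since $|u|<q^{-1/2}<1$ and $|zu|<1$, for every $p\in\mathcal{P}$ we have $|u^{\deg p}|<1$ and $|zu^{\deg p}|\le|zu|<1$, so each local factor is nonzero, with $1-u^{\deg p}$ lying in the right half-plane, and (with the principal branch of the logarithm) it equals $\exp\bigl(-\log(1-zu^{\deg p})+z\log(1-u^{\deg p})\bigr)$. Writing $w=u^{\deg p}$ and expanding the two logarithms as power series,
$$-\log(1-zw)+z\log(1-w)=\sum_{m\ge 1}\frac{z^m}{m}w^m-z\sum_{m\ge 1}\frac{w^m}{m}=\sum_{m\ge 2}\frac{z^m-z}{m}\,w^m ,$$
the decisive point being that the $m=1$ terms cancel. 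This cancellation is exactly what the correcting factor $(1-u^{\deg p})^z$ in $F_z(u)$ was designed to produce, and it is what enlarges the region of convergence from $|u|<q^{-1}$ (all that was available for $G_z$) to $|u|<q^{-1/2}$.

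Next I would sum the moduli of these local contributions over $p$, grouping by degree and using $\#\mathcal{P}_d\le q^d$:
$$\sum_{p\in\mathcal{P}}\left|\sum_{m\ge 2}\frac{z^m-z}{m}\,u^{m\deg p}\right|\le\sum_{d\ge 1}q^d\sum_{m\ge 2}\frac{|z|^m+|z|}{m}|u|^{md}=\sum_{m\ge 2}\frac{|z|^m+|z|}{m}\cdot\frac{q|u|^m}{1-q|u|^m}.$$
The geometric series in $d$ is summable precisely because $q|u|^m\le q|u|^2<1$ for every $m\ge 2$; this is the sole place where the defining condition $|u|<q^{-1/2}$ of $\mathcal{R}$ enters. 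Since moreover $|z||u|<1$, $|u|<1$ and $|z|<q$ on $\mathcal{R}$, bounding $q|u|^m/(1-q|u|^m)\le q|u|^m/(1-q|u|^2)$ and splitting the sum over $m$ shows the right-hand side is at most $\tfrac{q}{1-q|u|^2}\bigl(-\log(1-|z||u|)-|z|\log(1-|u|)\bigr)<\infty$. On any compact $K\subset\mathcal{R}$ one has uniform estimates $|z|\le M<q$, $|u|\le\rho<q^{-1/2}$ and $|zu|\le\sigma<1$, so the bound is uniform over $K$; hence $\sum_{p\in\mathcal{P}}\bigl(-\log(1-zu^{\deg p})+z\log(1-u^{\deg p})\bigr)$ converges absolutely and uniformly on compact subsets of $\mathcal{R}$.

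To conclude, I would observe that each partial product $\prod_{\deg p\le D}(1-zu^{\deg p})^{-1}(1-u^{\deg p})^z$ is holomorphic in $(u,z)$ on $\mathcal{R}$: the factors $1-zu^{\deg p}$ and $1-u^{\deg p}$ are zero-free there, and $1-u^{\deg p}$ lies in the right half-plane so that $(1-u^{\deg p})^z=\exp\bigl(z\log(1-u^{\deg p})\bigr)$ is well defined and holomorphic. By the previous paragraph the logarithms of these partial products converge uniformly on compacts, so, exponentiating, the partial products themselves converge uniformly on compact subsets of $\mathcal{R}$ to $F_z(u)$; a locally uniform limit of holomorphic functions of two complex variables is holomorphic (apply Cauchy's integral formula on polydiscs, or invoke Osgood's lemma), which gives the claim. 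One could equally deduce convergence of the product from the criterion recalled before the lemma, applied to $a_p$ equal to the $p$-th local factor minus $1$, since $|a_p|$ and $|\log(1+a_p)|$ are comparable once $\deg p$ is large. The only genuine obstacle is noticing the $m=1$ cancellation and recognising that the resulting double sum is finite exactly on the range $q|u|^2<1$; everything after that is routine bookkeeping with uniform bounds on compact sets.
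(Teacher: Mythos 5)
Your proof is correct and follows essentially the same route as the paper: pass to the logarithm of each Euler factor, observe the cancellation of the linear term in $u^{\deg p}$ (the paper writes this as the $-zu^n+zu^n$ cancellation followed by $O(|q^2u^{2n}|)$, while you expand the full power series), and then sum over primes via $\#\mathcal{P}_d\le q^d$ using the defining condition $q|u|^2<1$ of $\mathcal{R}$ to close the estimate uniformly on compacts. Your version is somewhat more explicit in the bookkeeping of the tail and in the final passage from uniform convergence of the logarithmic series to holomorphy of the product, but the underlying argument is the same.
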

\begin{proof}
We take the standard branch of complex logarithm defined over $\mathbb{C}\setminus(-\infty,0]$. Since $|u|^n,|zu^n|<1$ we could make use of their Taylor series and write
\begin{align}
\nonumber
z\log(1-u^n)-\log(1-zu^n)  &= -zu^n+O\left(\left|zu^{2n}\right|\right)+zu^n+O\left(\left|z^2u^{2n}\right|\right)\\
\nonumber
&= O\left(\left|q^2u^{2n}\right|\right)\qquad \left(|z|<q\right)\\
\end{align}
and hence using $\exp(O(x))=1+O(x)$ for $x=O(1)$ we get
$$(1-u^n)^z(1-zu^n)^{-1}=1+O\left(\left|q^2u^{2n}\right|\right).$$
We now observe the product in (\ref{l55}) converges uniformly in the compact subsets of $\mathcal{R}$. Since $\#\mathcal{P}_n\leqslant q^n$, we have that $$\displaystyle\sum\limits_{p\in\mathcal{P}}|u|^{2n}\leqslant \displaystyle\sum\limits_{n\geqslant 1}|qu^2|^{n}<\infty,$$
where the finite quantity $\sum\limits_{n\geqslant 1}|qu^2|^{n}$ depends on the chosen compact subset of $\mathcal{R}$, but is independent of the point $(u,z)$.
\end{proof}
We recall the Riemann zeta function in the context of polynomials over finite field, defined for $|u|<q^{-1}$
$$\zeta(u)=\dfrac{1}{1-qu}=\displaystyle\prod_{p\in\mathcal{P}}\left(1-u^{deg(p)}\right)^{-1}.$$
As we can see $\zeta(u)$ can be continued mermorphically on $\mathbb{C}$ with a simple pole at $u=1/q$. Furthermore considering the Euler products in (\ref{l40}) and $(\ref{l55})$ we can connect $G_z(u)$ and $F_z(u)$ in the following way whenever $|u|<\min \big\{ |z|^{-1},q^{-\frac 12}\big\}$
\begin{equation}\label{l621}
G_z(u)=\zeta(u)^zF_z(u).
\end{equation}
We are now in a position to prove our first Proposition. However, first, we want to record the following estimate [\ref{l1}, Lemma 1], which is going to be helpful in the further calculation of the contribution that comes from the singularity of $G_z(u)$ at $u=1/q$. 
\begin{figure}[h]
    \centering
    
    \caption{The contour $\mathcal{H}$ appearing in Lemma \ref{l3}}
    \label{A}
\tikzset{every picture/.style={line width=0.75pt}} 

\begin{tikzpicture}[x=0.75pt,y=0.75pt,yscale=-1,xscale=1]

\draw    (261,140) -- (261.98,244) ;
\draw [shift={(262,246)}, rotate = 269.46] [color={rgb, 255:red, 0; green, 0; blue, 0 }  ][line width=0.75]    (10.93,-3.29) .. controls (6.95,-1.4) and (3.31,-0.3) .. (0,0) .. controls (3.31,0.3) and (6.95,1.4) .. (10.93,3.29)   ;
\draw    (256,141) -- (390,141) ;
\draw [shift={(392,141)}, rotate = 180] [color={rgb, 255:red, 0; green, 0; blue, 0 }  ][line width=0.75]    (10.93,-3.29) .. controls (6.95,-1.4) and (3.31,-0.3) .. (0,0) .. controls (3.31,0.3) and (6.95,1.4) .. (10.93,3.29)   ;
\draw    (261,140) -- (261,32) ;
\draw [shift={(261,30)}, rotate = 90] [color={rgb, 255:red, 0; green, 0; blue, 0 }  ][line width=0.75]    (10.93,-3.29) .. controls (6.95,-1.4) and (3.31,-0.3) .. (0,0) .. controls (3.31,0.3) and (6.95,1.4) .. (10.93,3.29)   ;
\draw    (256,141) -- (133,140.02) ;
\draw [shift={(131,140)}, rotate = 0.46] [color={rgb, 255:red, 0; green, 0; blue, 0 }  ][line width=0.75]    (10.93,-3.29) .. controls (6.95,-1.4) and (3.31,-0.3) .. (0,0) .. controls (3.31,0.3) and (6.95,1.4) .. (10.93,3.29)   ;
\draw  [draw opacity=0] (229.31,145.59) .. controls (232.13,159.49) and (245.24,170) .. (261,170) .. controls (278.81,170) and (293.25,156.57) .. (293.25,140) .. controls (293.25,123.43) and (278.81,110) .. (261,110) .. controls (244.77,110) and (231.35,121.15) .. (229.09,135.66) -- (261,140) -- cycle ; \draw   (229.31,145.59) .. controls (232.13,159.49) and (245.24,170) .. (261,170) .. controls (278.81,170) and (293.25,156.57) .. (293.25,140) .. controls (293.25,123.43) and (278.81,110) .. (261,110) .. controls (244.77,110) and (231.35,121.15) .. (229.09,135.66) ;  
\draw    (158,145) -- (201.31,145.59) ;
\draw    (157.16,133.76) -- (188.17,134.7) ;
\draw [shift={(190.16,134.76)}, rotate = 181.74] [color={rgb, 255:red, 0; green, 0; blue, 0 }  ][line width=0.75]    (10.93,-3.29) .. controls (6.95,-1.4) and (3.31,-0.3) .. (0,0) .. controls (3.31,0.3) and (6.95,1.4) .. (10.93,3.29)   ;
\draw    (229.31,145.59) -- (203.31,145.59) ;
\draw [shift={(201.31,145.59)}, rotate = 360] [color={rgb, 255:red, 0; green, 0; blue, 0 }  ][line width=0.75]    (10.93,-3.29) .. controls (6.95,-1.4) and (3.31,-0.3) .. (0,0) .. controls (3.31,0.3) and (6.95,1.4) .. (10.93,3.29)   ;
\draw    (190.16,134.76) -- (229.09,135.66) ;

\draw (263,143.4) node [anchor=north west][inner sep=0.75pt]    {$O$};
\draw (379,146.4) node [anchor=north west][inner sep=0.75pt]    {$x$};
\draw (257,14.4) node [anchor=north west][inner sep=0.75pt]    {$y$};
\draw (146,145.4) node [anchor=north west][inner sep=0.75pt]    {$ \begin{array}{l}
-n\delta \\
\end{array}$};

\end{tikzpicture}

\end{figure}
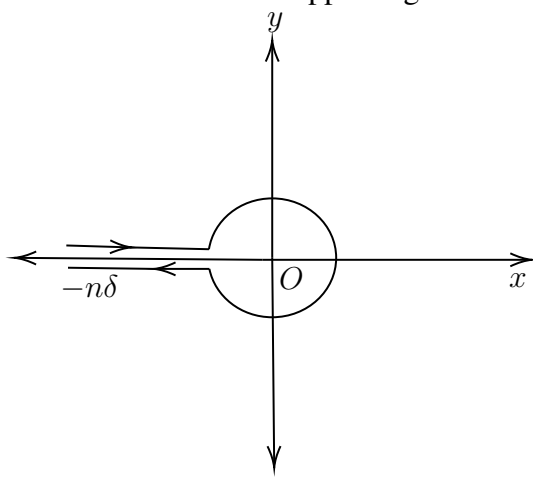
\begin{Lemma}\label{l3}
 Let $A, \delta>0$. Let $\mathcal{H}$ be the Hankel contour of radius $1$ around $0$ going in clockwise direction along the negative real axis to $-n\delta$. Then uniformly for $|z|\leqslant A$ we have that
$$\dfrac{1}{2\pi i}\displaystyle\int_{\mathcal{H}} w^z \dfrac{dw}{\left(1-\dfrac{w}{n}\right)^{n+1}}=-\dfrac{1}{\Gamma(-z)}+O_{\delta,A} \left( \dfrac{1}{n}\right).$$
\end{Lemma}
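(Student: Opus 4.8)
The statement is a standard Hankel-type integral identity, and the natural approach is to compare the integral against the classical Hankel contour formula for $1/\Gamma$, namely
\[
\frac{1}{2\pi i}\int_{\mathcal{H}_0} w^z e^{-w}\,\frac{dw}{w} \;=\; \frac{1}{\Gamma(z)},
\]
or, in the shifted form we actually need here,
\[
\frac{1}{2\pi i}\int_{\mathcal{H}_0} w^z e^{w}\, dw \;=\; -\frac{1}{\Gamma(-z)},
\]
where $\mathcal{H}_0$ is the full Hankel contour wrapping the negative real axis from $-\infty$ back to $-\infty$. So the first step is to write $\bigl(1-w/n\bigr)^{-(n+1)} = e^{-(n+1)\log(1-w/n)}$ and expand the exponent: on the contour $\mathcal{H}$, where $|w|\le n\delta$, one has $-(n+1)\log(1-w/n) = w + O\!\bigl(|w|^2/n\bigr) + O\!\bigl(|w|/n\bigr)$, uniformly. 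Hence $\bigl(1-w/n\bigr)^{-(n+1)} = e^{w}\bigl(1 + O(|w|^2/n) + O(|w|/n)\bigr)$ on the part of the contour where $|w|$ is bounded, with a suitable cruder bound where $|w|$ is large but still $\le n\delta$.

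**Key steps.** (1) Replace $\bigl(1-w/n\bigr)^{-(n+1)}$ by $e^{w}$ on $\mathcal{H}$ and track the error: write the integral as $\frac{1}{2\pi i}\int_{\mathcal{H}} w^z e^{w}\,dw$ plus an error integral $\frac{1}{2\pi i}\int_{\mathcal{H}} w^z e^{w}\bigl(e^{E(w)}-1\bigr)\,dw$ where $E(w) = -(n+1)\log(1-w/n) - w$. (2) Bound the error integral: split $\mathcal{H}$ into the piece with $|w|\le \sqrt{n}$ (say), where $|e^{E(w)}-1|\ll |w|^2/n + |w|/n$ and $|w^z e^w|\ll |w|^{\mathrm{Re}(z)} e^{\mathrm{Re}(w)}$ with $\mathrm{Re}(w)$ bounded above, giving a contribution $O_{\delta,A}(1/n)$ after integrating the convergent factor $|w|^{\mathrm{Re}(z)+2} e^{-c|w|}$; and the piece with $\sqrt{n} < |w| \le n\delta$, where on the two horizontal rays $\mathrm{Re}(w)$ is large and negative so $e^{\mathrm{Re}(w)}$ decays superpolynomially, while $|(1-w/n)^{-(n+1)}|\le (1-\delta)^{-(n+1)}$ is at worst exponential in $n$ — one checks $e^{-c\sqrt n}\cdot (1-\delta)^{-n}$ is still $o(1/n)$ by choosing the split radius, or more simply one uses $|(1-w/n)^{-(n+1)}|\ll e^{\mathrm{Re}(w)}$ for $\mathrm{Re}(w)\le 0$ directly. (3) Extend $\frac{1}{2\pi i}\int_{\mathcal{H}} w^z e^{w}\,dw$ to the full Hankel contour: the tail beyond $-n\delta$ contributes $\ll \int_{n\delta}^{\infty} t^{\mathrm{Re}(z)} e^{-t}\,dt \ll_{A} e^{-n\delta/2} \ll 1/n$, and the completed integral equals $-1/\Gamma(-z)$ by the classical formula. (4) Assemble: the total is $-1/\Gamma(-z) + O_{\delta,A}(1/n)$, uniformly for $|z|\le A$ since every implied constant depended only on $\mathrm{Re}(z)\le A$ and on $\delta$.

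**Main obstacle.** The one delicate point is making the error estimate genuinely uniform in $z$ over the whole disk $|z|\le A$ and controlling the middle range of the contour where $|w|$ grows with $n$: the factor $(1-w/n)^{-(n+1)}$ can be as large as $(1-\delta)^{-(n+1)}$, so one must be careful that the decay of $e^{\mathrm{Re}(w)}$ (or of the original integrand's modulus on the horizontal rays, where $\mathrm{Re}(w)\to -\infty$) genuinely dominates this potential growth. This is handled cleanly by the elementary inequality $\bigl|\,(1-w/n)^{-(n+1)}\,\bigr| \le e^{\mathrm{Re}(w)}$ valid for $\mathrm{Re}(w) \le 0$ (from $\log(1-x)\le -x$ applied to real parts appropriately, i.e. $|1-w/n|^2 \ge 1 - 2\mathrm{Re}(w)/n \ge e^{-2\mathrm{Re}(w)/n}\cdot(\text{correction})$ — more precisely $|1-w/n| \ge 1 - \mathrm{Re}(w)/n \ge e^{-\mathrm{Re}(w)/n}$ when $\mathrm{Re}(w)\le 0$), which kills the growth before it starts and reduces everything to dominated convergence against $|w|^{A} e^{-|w|/2}$ on the contour. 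Since this is precisely the content of [\ref{l1}, Lemma 1], I would cite it for the routine verifications and only highlight the contour-geometry bookkeeping.
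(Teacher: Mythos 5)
The paper does not actually prove this lemma: it records it as \cite{}-style borrowed material, namely ``[\ref{l1}, Lemma 1]'' (Porritt), and uses it as a black box. So there is no in-paper proof to compare against. Your plan is, nonetheless, the standard argument (and almost certainly the one Porritt has in mind, modelled on the integer-side analysis in Tenenbaum's book): (i) on the truncated Hankel contour replace $(1-w/n)^{-(n+1)}$ by $e^w$ with a quantified error $e^{E(w)}-1$, (ii) split $\mathcal{H}$ into a near piece $|w|\leqslant\sqrt n$ and a far piece $\sqrt n<|w|\leqslant n\delta$, and bound each, (iii) complete $\int_{\mathcal{H}}w^ze^w\,dw$ to the full clockwise Hankel contour, picking up $-1/\Gamma(-z)$ plus an exponentially small tail, and (iv) note uniformity in $|z|\leqslant A$ because all implied constants depend only on $A$, $\delta$. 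The sign bookkeeping for the clockwise orientation is right, and the near-field estimate $|E(w)|\ll|w|/n+|w|^2/n$ (hence $|e^{E(w)}-1|\ll|w|^2/n+|w|/n$) together with the convergent $\int|w|^{A+2}e^{\operatorname{Re}(w)}|dw|$ does give the claimed $O_{\delta,A}(1/n)$. So the architecture is correct and matches the cited source's route.

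One concrete slip worth fixing: the parenthetical ``$|1-w/n|\geqslant 1-\operatorname{Re}(w)/n\geqslant e^{-\operatorname{Re}(w)/n}$'' has the last inequality reversed. Writing $\operatorname{Re}(w)=-t$ with $t\geqslant 0$, the standard fact is $1+t/n\leqslant e^{t/n}$, not $\geqslant$; consequently the chain as written would give $|(1-w/n)^{-(n+1)}|\geqslant e^{\operatorname{Re}(w)}$, which is the wrong direction and does not by itself control the far piece. The fix is easy and does not change your conclusion: use instead $\log(1+x)\geqslant x/(1+x)$ for $x\geqslant 0$, so that for $t\leqslant n\delta$,
\[
(n+1)\log\!\left(1+\tfrac{t}{n}\right)\ \geqslant\ \frac{(n+1)t}{n+t}\ \geqslant\ \frac{t}{1+\delta},
\]
whence $|(1-w/n)^{-(n+1)}|\leqslant(1+t/n)^{-(n+1)}\leqslant e^{-t/(1+\delta)}$. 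This genuine exponential decay in $t=|\operatorname{Re}(w)|$, together with $|e^{w}|=e^{-t}$ and $|w^z|\ll_A|w|^A$ on the rays, makes the contribution from $\sqrt n<|w|\leqslant n\delta$ (and the tail beyond $n\delta$) of size $O(e^{-c\sqrt n})$, more than enough. On the near piece the only thing to watch is that $E(w)$ stays $O(1)$ so that $|e^{E(w)}-1|\ll|E(w)|$ is legitimate; this holds since $|w|^2/n\leqslant 1$ there. With that correction your sketch is a complete proof of the lemma, uniformly in $|z|\leqslant A$.
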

\begin{figure}[h]
    \centering
    
    \caption{The contour appearing in the proof of Proposition \ref{la}}
    \label{A}
\begin{tikzpicture}[scale=0.8]
  \tkzInit[xmin=-6,ymin=-6,xmax=6,ymax=6]
  \tkzDrawXY[noticks]
  \tkzDefPoint(0,0){L}
   \tkzDefPoint(3,0){O}
   \tkzDefPoint(3.5,.2){B} \tkzDefPoint(3.5,-.2){C}
   \tkzDefPoint(5.8,.2){D} \tkzDefPoint(5.8,-.2){E}  
  \tkzDrawArc[color=red,line width=1pt](O,B)(C) 
  \begin{scope}[decoration={markings,
      mark=at position .5 with {\arrow[scale=2]{>}};}]
    \tkzDrawSegments[postaction={decorate},color=red,line width=1pt](B,D E,C)
  \end{scope}
  \begin{scope}[decoration={markings,
     mark=at position .20 with {\arrow[scale=2]{>}},
     mark=at position .70 with {\arrow[scale=2]{>}};}]
    \tkzDrawArc[postaction={decorate},color=black,line width=1pt](L,D)(E)
  \end{scope}
  \draw[green,thick] (0,0) circle (1.5cm);
  \draw[style=dashed] (0,0) circle (3);
  \draw (4.4,1) node[below left] {$\textcolor{red}{\mathcal{H}'}$};
  \draw (5.4,6.5) node[below left] {$\textcolor{black}{|u|=1/q+\eta}$};
  \draw (2.6,4) node[below left] {$\textcolor{black}{|u|=1/q}$};
  \draw (2.1,2.3) node[below left] {$\textcolor{green}{|u|=r}$};
\end{tikzpicture}
\end{figure}
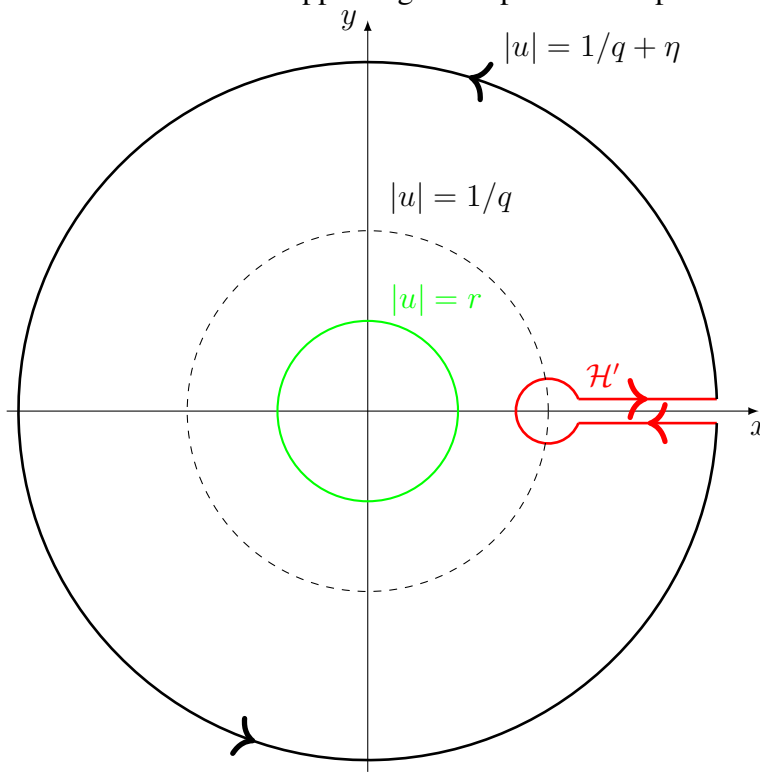
\subsection{Deduction of Proposition \ref{la}}
\begin{proof}[Proof of Proposition \ref{la}]
Let $|u|=r < q^{-1}$ (small green circle in Fig-\ref{A} above). Using Cauchy's integral formula on (\ref{l40}) we get for each fixed $z$ satisfying the hypothesis $|z|\leqslant q-\epsilon$,
\begin{equation}\label{l101}
M_z(n)=\dfrac{1}{2\pi i}\displaystyle\int\limits_{|u|=r}G_z(u) \dfrac{du}{u^{n+1}}=\dfrac{1}{2\pi i}\displaystyle\int\limits_{|u|=r}\zeta(u)^zF_z(u) \dfrac{du}{u^{n+1}}.
\end{equation}
We wish to get past the point $u=1/q$ so that we could collect the contribution coming from the singularity of $\zeta(u)$ there. We shift the smaller (green) circle $|u|=r$ to a bigger circle (black) $|u|=q^{-1}+\eta$ and a portion of Hankel contour (red) around the point $q^{-1}$ (Fig-\ref{A}). (to be specified shortly) Let $\mathcal{H}'$ be the contour that consists of a circle of radius $(qn)^{-1}$ traversed clockwise around $q^{-1}$ and the two line segments on the ray $0$ to $q^{-1}$ joining this small circle to the bigger circle $|u|=q^{-1}+\eta$ where $\eta>0$ is a fixed number. However we must have $q^{-1}+\eta <q^{-\frac 12}$ as we cannot go beyond the region $\mathcal{R}$ in (\ref{l621}) where $G_z(u)$ makes sense. We recall that $|z|\leqslant (q-\epsilon)$ inside $\mathcal{R}$ while performing the above integral. Keeping this in mind we get another constraint on $\eta$, $|uz|\leqslant (q^{-1}+\eta)(q-\epsilon)\leqslant 1$. This shows taking any $0<\eta<\min\{1/(q-\epsilon)-1/q,1/\sqrt{q}-1/q \}$ suffices. We then plan to use Lemma \ref{l3} to evaluate the integral in (\ref{l101}) with $\eta=\min\{1/(q-\epsilon)-1/q,1/\sqrt{q}-1/q \}/2$ for $|z|\leqslant q-\epsilon.$ We observe that for $z$ fixed, $F_z(u)$ and $\zeta(u)$ are both analytic in an open neighbourhood containing the circle $|u|=q^{-1}+\eta$ which is compact. This implies $F_z(u),\zeta(u)$ and hence $G_z(u)$ are uniformly bounded (bound depends on $q,\eta$) on this circle. Thus we obtain
$$M_z(n)=\dfrac{1}{2\pi i}\displaystyle\int\limits_{\mathcal{H}'} G_z(u) \dfrac{du}{u^{n+1}} +O\left( \displaystyle\int\limits_{|u|=\frac{1}{q}+\eta} \left| G_z(u) \dfrac{du}{u^{n+1}}\right|\right):=I+ O_{q,\epsilon}\left( \dfrac{q^n}{(1+q\eta)^n}\right),$$
where at the last step we used the fact that, $\int_{|u|=R}|du/u^{n+1}|= \frac{1}{R^{n+1}}\int_{|u|=R}|du|=2\pi/R^{n}$ and the big-O-uniform bound here depends only on $q$ and $\eta$ while $\eta$ according our choice depends only on $q$ and $\epsilon$.  Now we focus on the integral $I$.\\
Analyticity of $F_z$ near $q^{-1}$ allows us to consider the Taylor series around $q^{-1}$ and write
$$F_z(u)=F_z\left( 1/q\right)+a_1\left(u-1/q\right)+ a_2\left(u-1/q\right)^2+\ldots$$ 
Hence we have that
$$I=F_z\left( \dfrac{1}{q}\right)\dfrac{1}{2\pi i}\displaystyle\int_{\mathcal{H}'}\zeta(u)^z\dfrac{du}{u^{n+1}}+O\left(  \displaystyle\int_{\mathcal{H}'}\left|\left( u-\dfrac{1}{q}\right)\zeta(u)^z \dfrac{du}{u^{n+1}}\right|\right).$$
We use a change of variable so that our contour $\mathcal{H}'$ transforms to $\mathcal{H}$ and we could then use our Lemma \ref{l3}. The change of variable is given by, $w=n(1-uq)\implies u=1/q\left( 1-w/n\right).$ We can quickly check the transformation is indeed a valid one. If $u$ is parameterized by $u=1/q+(1/qn)e^{-it},t\in [\epsilon',2\pi-\epsilon']$  for some small enough $\epsilon'>0$ near the tiny circle around $1/q$ then $w=-e^{-it}$ is a circle revolving around $0$ of radius $1$ in the clockwise direction and similarly the horizontal ray part also gets reflected and $\mathcal{H}'$ indeed transforms into $\mathcal{H}$ and as $|u|$ goes up to $1/q+\eta$, we have $w$ goes up to $-n\delta$ with $\delta=\eta q$. We note for this change of variable $du=-dw/nq$. First we show the alleged error term is small. The $O$-term is 
\begin{align}
\nonumber
\ll_q\displaystyle\int_{\mathcal{H}'}\left|\dfrac{(1-qu)du}{u^{n+1}(1-qu)^z} \right| &=  \displaystyle\int_{\mathcal{H}}\left|\dfrac{1}{\left(\dfrac{w}{n}\right)^{z-1}}\dfrac{\dfrac{-dw}{nq}}{\dfrac{1}{q^{n+1}}\left(1-\dfrac{w}{n} \right)^{n+1}}\right|\\
\nonumber
&=\dfrac{q^n}{n} \displaystyle\int_{\mathcal{H}}\left|\dfrac{1}{\left(\dfrac{w}{n}\right)^{z-1}}\dfrac{dw}{\left(1-\dfrac{w}{n} \right)^{n+1}}\right| \qquad  (\text{ using Lemma }\ref{l3})\\
\nonumber
&\ll q^n n^{\Re(z)-2}.
\end{align}
Now, we evaluate the main term using Lemma \ref{l3}
\begin{align}
\nonumber
F_z\left( \dfrac{1}{q}\right)\dfrac{1}{2\pi i}\displaystyle\int_{\mathcal{H}'}\zeta(u)^z\dfrac{du}{u^{n+1}} &= F_z\left( \dfrac{1}{q}\right)\dfrac{1}{2\pi i}\displaystyle\int_{\mathcal{H}'}\dfrac{1}{(1-qu)^z}\dfrac{du}{u^{n+1}}\\
\nonumber
&=F_z\left( \dfrac{1}{q}\right)\dfrac{q^n}{n}\dfrac{1}{2\pi i}\displaystyle\int_{\mathcal{H}}\dfrac{1}{\left(\dfrac{w}{n}\right)^{z}}\dfrac{-dw}{\left(1-\dfrac{w}{n} \right)^{n+1}}\\
\nonumber
&=F_z\left( \dfrac{1}{q}\right)q^n n^{z-1}\left( \dfrac{1}{\Gamma(z)}+O_{\epsilon}\left( \dfrac{1}{n}\right)\right).
\end{align}
Hence we have that 
\begin{equation}\label{lx}
M_z(n)=F_z\left( \dfrac{1}{q}\right)\dfrac{q^nn^{z-1}}{\Gamma(z)}+O\left( q^n n^{\Re(z)-2}\right)=q^nn^{z-1}\Bigg \{zH(z)+O_{\epsilon}\left(\dfrac{1}{n} \right) \Bigg \},
\end{equation}
where $H(z)=\frac{1}{\Gamma(z+1)}F_z\left( \frac{1}{q}\right)$ is an analytic function for $|z|<q.$ 
\end{proof}
\section{Deduction of Theorem \ref{l6}}
With Proposition $\ref{la}$ at our disposal, we are now ready to prove Theorem $\ref{l6}$. We also note three other straightforward results which will help us do so. We will verify the following claims in the appendix \ref{le}.
 \begin{restatable}{Lemma}{lll}\label{l15}
 For $t\in [-\pi,\pi]$ we have $\cos t-1\leqslant -\dfrac{t^2}{5}$ and $\left|1-e^{it} \right|^2 \leqslant t^2.$
 \end{restatable}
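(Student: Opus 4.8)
The plan is to reduce both inequalities to standard facts about $\sin$ and the function $x\mapsto \sin x/x$. Since $\cos t$ and $\left|1-e^{it}\right|^2$ are even in $t$, it suffices to treat $t\in[0,\pi]$ throughout.

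For the second inequality I would first expand $\left|1-e^{it}\right|^2=(1-\cos t)^2+\sin^2 t=2-2\cos t=4\sin^2(t/2)$, using the half-angle identity $1-\cos t=2\sin^2(t/2)$. The claimed bound $\left|1-e^{it}\right|^2\le t^2$ then becomes $2\left|\sin(t/2)\right|\le |t|$, which is the elementary inequality $|\sin x|\le |x|$ (valid for all real $x$, e.g.\ by the mean value theorem applied to $\sin$). In particular this half of the lemma holds for every $t\in\mathbb{R}$, not just on $[-\pi,\pi]$.

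For the first inequality I would again write $1-\cos t=2\sin^2(t/2)$, so that the claim $\cos t-1\le -t^2/5$ is equivalent to $h(t):=\dfrac{2\sin^2(t/2)}{t^2}=\dfrac12\left(\dfrac{\sin(t/2)}{t/2}\right)^2\ge\dfrac15$ for $t\in(0,\pi]$ (with the removable value $h(0)=1/2$). The key step is that $x\mapsto \sin x/x$ is positive and strictly decreasing on $(0,\pi]$: the numerator of its derivative, $x\cos x-\sin x$, vanishes at $0$ and has derivative $-x\sin x<0$ on $(0,\pi)$, hence $x\cos x-\sin x<0$ there, so $(\sin x/x)'<0$. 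Consequently $h$ is decreasing on $(0,\pi]$, and therefore $h(t)\ge h(\pi)=2/\pi^2$. Finally $2/\pi^2>1/5$ because $\pi^2<10$, which closes the argument.

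There is no serious obstacle here; the only point needing a little care is the monotonicity of $\sin x/x$ on $(0,\pi]$, and even that can be sidestepped at the cost of a cruder case split (use $\cos t\le 1-t^2/2+t^4/24$ on a subinterval such as $[0,2]$, where the quartic term is small enough, and a direct numerical bound on $[2,\pi]$, where $\cos t$ is bounded away from $1$). I would present the $\sin x/x$ version, since it is cleanest and makes transparent why any constant below $2/\pi^2$ — in particular $1/5$ — works.
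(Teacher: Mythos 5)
Your proof is correct, and for the first inequality it takes a genuinely cleaner route than the paper's. The paper defines $f(t)=1-\tfrac{t^2}{5}-\cos t$, finds the critical equation $\sin t=\tfrac{2t}{5}$, and then appeals to "a quick graph plot or plugging this equation into a computer" to locate the nonzero critical point near $t\approx 2.125$ and to check $f$ is nonnegative there; it then concludes by evenness. This gets the job done but is not fully self-contained. Your version rewrites $1-\cos t=2\sin^2(t/2)$ so that the claim becomes $\tfrac12\bigl(\sin(t/2)/(t/2)\bigr)^2\geqslant\tfrac15$, and then invokes the classical fact that $x\mapsto \sin x/x$ is decreasing on $(0,\pi]$ (proved rigorously from $x\cos x-\sin x$ having derivative $-x\sin x<0$), which reduces the whole inequality to the single endpoint check $2/\pi^2>1/5$, i.e.\ $\pi^2<10$. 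This is a fully rigorous, computer-free argument and makes transparent that any constant below $2/\pi^2$ would serve. For the second inequality the two proofs are essentially the same in substance: the paper integrates $\sin x\leqslant x$ to get $1-\cos t\leqslant t^2/2$ and then notes $|1-e^{it}|^2=2(1-\cos t)$, while you go through $|1-e^{it}|^2=4\sin^2(t/2)$ and apply $|\sin x|\leqslant|x|$ directly; both hinge on the same elementary bound.
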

  \begin{restatable}{Lemma}{p}\label{l16} We have the following upper bound
 $$\displaystyle\int_{|z|=r}\left|n^{z-r} \right| |dz|\ll \dfrac{r}{\sqrt{j}},$$
 where $j=r\log n.$
 \end{restatable}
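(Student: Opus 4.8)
\textbf{Proof proposal for Lemma \ref{l16}.} The plan is to parametrize the circle and reduce the integral to a one‑dimensional real Gaussian‑type integral in the variable $j=r\log n$, and then apply Lemma \ref{l15}.

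First I would write $z=re^{i\theta}$ with $\theta\in[-\pi,\pi]$, so that $|dz|=r\,d\theta$. Since $n^{z-r}=\exp\big((z-r)\log n\big)$, its modulus depends only on the real part, and
$$
\big|n^{z-r}\big|=n^{\Re(z)-r}=n^{r(\cos\theta-1)}=e^{j(\cos\theta-1)},\qquad j:=r\log n .
$$
Hence the integral on the left‑hand side equals
$$
r\int_{-\pi}^{\pi}e^{j(\cos\theta-1)}\,d\theta .
$$
Now I would invoke the first inequality of Lemma \ref{l15}, namely $\cos\theta-1\leqslant-\theta^{2}/5$ for $\theta\in[-\pi,\pi]$, so that the integrand is bounded by $e^{-j\theta^{2}/5}$. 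Extending the range of integration to all of $\mathbb{R}$ and evaluating the resulting Gaussian integral gives
$$
r\int_{-\pi}^{\pi}e^{j(\cos\theta-1)}\,d\theta\;\leqslant\; r\int_{-\infty}^{\infty}e^{-j\theta^{2}/5}\,d\theta\;=\;r\sqrt{\frac{5\pi}{j}}\;\ll\;\frac{r}{\sqrt{j}},
$$
which is exactly the claimed bound, with an absolute implied constant (about $\sqrt{5\pi}$). Note this works uniformly for all $j>0$, since the only place $r$ and $n$ enter after the substitution is through $j$ itself.

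There is no genuine obstacle here; this is a routine saddle‑point localization, and the only point worth being slightly careful about is that the implied constant must be absolute (not depend on $r$, $n$, or the ambient parameters), which is transparent from the computation above once everything is expressed through $j$. The lemma is essentially the statement that the circle integral $\tfrac{1}{2\pi}\int_{|z|=r}|n^{z}|\,|dz|$ is concentrated, by a factor $\asymp j^{-1/2}=(r\log n)^{-1/2}$, near the point $z=r$ where $|n^z|$ is maximized, and Lemma \ref{l15} supplies precisely the quadratic decay needed to make this rigorous.
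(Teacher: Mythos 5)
Your proof is correct and follows essentially the same approach as the paper: parametrize the circle, reduce $|n^{z-r}|$ to $e^{j(\cos\theta-1)}$, apply Lemma \ref{l15} to obtain quadratic decay, and evaluate the resulting Gaussian integral. The only difference from the paper's proof is cosmetic (the paper parametrizes by $t\in[-\tfrac12,\tfrac12)$ with $z=re^{2\pi it}$ instead of $\theta\in[-\pi,\pi]$), so there is nothing substantive to compare.
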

 \begin{restatable}{Lemma}{q}\label{l17} We have the following estimate
$$\displaystyle\int\limits_{-\infty}^{\infty} \theta^2\exp\left( - \dfrac{k\theta^2}{5} \right)d\theta \ll k^{-\frac{3}{2}}.$$
\end{restatable}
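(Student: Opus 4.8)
\textbf{Proof proposal for Lemma \ref{l17}.}

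The plan is to reduce this to a standard Gaussian moment computation by a change of variables. First I would substitute $\phi = \sqrt{k/5}\,\theta$, so that $d\theta = \sqrt{5/k}\,d\phi$ and $k\theta^2/5 = \phi^2$. Then the integral becomes
\[
\int_{-\infty}^{\infty} \theta^2 \exp\!\left(-\frac{k\theta^2}{5}\right) d\theta
= \frac{5}{k}\cdot\sqrt{\frac{5}{k}} \int_{-\infty}^{\infty} \phi^2 e^{-\phi^2}\, d\phi
= \left(\frac{5}{k}\right)^{3/2} \int_{-\infty}^{\infty} \phi^2 e^{-\phi^2}\, d\phi.
\]
The remaining integral $\int_{-\infty}^{\infty} \phi^2 e^{-\phi^2}\,d\phi$ is an absolute constant (equal to $\sqrt{\pi}/2$, e.g.\ by differentiating $\int_{-\infty}^{\infty} e^{-s\phi^2}\,d\phi = \sqrt{\pi/s}$ in $s$ and setting $s=1$, or by a single integration by parts). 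Hence the whole expression is $(\sqrt{\pi}/2)(5/k)^{3/2} \ll k^{-3/2}$, which is exactly the claimed bound.

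Since the statement only asks for an upper bound rather than an exact evaluation, one could alternatively avoid computing the Gaussian constant at all: after the substitution it suffices to note that $\int_{-\infty}^\infty \phi^2 e^{-\phi^2}\,d\phi$ is finite (the integrand is continuous and decays faster than any polynomial), and finiteness of a fixed integral plus the explicit factor $(5/k)^{3/2}$ gives $\ll k^{-3/2}$ immediately. I would present the change of variables cleanly, flag that the resulting constant is independent of $k$, and conclude. There is essentially no obstacle here — the only thing to be careful about is bookkeeping the Jacobian powers of $k$ correctly ($1/k$ from the $\theta^2$ factor and $k^{-1/2}$ from $d\theta$, giving $k^{-3/2}$ total), and ensuring the implied constant is genuinely absolute, which it is since $k$ ranges over positive integers (or reals bounded below) and the tail integral converges uniformly.
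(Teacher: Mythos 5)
Your proof is correct and follows essentially the same route as the paper: the paper substitutes $z=k\theta^2/5$ and reduces to $\int_0^\infty \sqrt{z}\,e^{-z}\,dz=\Gamma(3/2)$, while you substitute $\phi=\sqrt{k/5}\,\theta$ and reduce to the Gaussian second moment — the same change of variables up to a trivial reparametrization, with the same $k^{-3/2}$ bookkeeping. Nothing further is needed.
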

\begin{proof}[Deduction of Theorem \ref{l6}]
We plan to recover $N(n,k)$ with another application of Cauchy's integral formula, 
\begin{equation}\label{l26}
N(n,k)=\dfrac{1}{2\pi i}\displaystyle\int\limits_{|z|=\frac{k}{\log n}} M_z(n) \dfrac{dz}{z^{k+1}}.
\end{equation}
Given $\epsilon\in(0,q)$ by the hypothesis of Theorem $\ref{l6}$, from (\ref{lx}) we see the main term of $M_z(n)$ which is $F_z\left( \frac{1}{q}\right)\frac{q^nn^{z-1}}{\Gamma(z)}$ is analytic in the disk $|z|=r={k}/{\log n}\leqslant (q-\epsilon)$ since $F_z\left( \frac{1}{q}\right)$ is analytic for $|z|<q$ and $n^z, 1/\Gamma$ are entire functions. 
Now writing $M_z(n)/q^n= n^{z-1}\left(f(z)+O\left(1/n \right) \right)$ where $f(z)=zH(z)=F_z\left( 1/q\right) \frac{1}{\Gamma(z)}$ and considering the Taylor expansion of $f(z)$ near $z=r$, $f(z)=f(r)+f'(r)(z-r)+O(|z-r|^2)$ we get
\begin{equation}\label{l59}
\dfrac{1}{q^n}N(n,k)=\dfrac{1}{2\pi i}\displaystyle\int_{|z|=r}n^{z-1}f(z) \dfrac{dz}{z^{k+1}}+O(E),
\end{equation}
where $E$ is given by $\dfrac{1}{n}\displaystyle\int_{|z|=r} \left|\dfrac{n^{z-1}}{z^{k+1}}\right||dz|$. We see that
$$\dfrac{1}{2\pi i}\displaystyle\int_{|z|=r} (z-r)n^{z-1}\dfrac{dz}{z^{k+1}}=\dfrac{1}{n}\Bigg\{ \dfrac{(\log n)^{k-1}}{(k-1)!}-r \dfrac{(\log n)^k}{k!}\Bigg \}=0.$$
Hence
\begin{align}
\nonumber
\dfrac{1}{q^n}N(n,k)&= f(r)\dfrac{1}{2\pi i}\displaystyle\int_{|z|=r}n^{z-1} \dfrac{dz}{z^{k+1}}+O\left(r^{-k-1}\displaystyle\int_{|z|=r} |n^{z-1}(z-r)^2||dz| +E \right) \\
\nonumber
&=f(r)\dfrac{(\log n)^k}{k!n}+\text{Error term}.
\end{align}
By using Lemma \ref{l15} and Lemma \ref{l17}, we see the integral in the \text{Error term} is bounded by
$$\dfrac{r^3}{n}\displaystyle\int_{-\pi}^{\pi}|1-e^{i\theta}|^2 e^{k\cos \theta}d\theta \ll \dfrac{r^3}{n}\displaystyle\int_{-\infty}^{\infty} \theta^2 e^{k(1-\frac{\theta^2}{5})}d\theta \ll \dfrac{r^3}{n}e^k k^{-\frac{3}{2}}.$$
Since there is a factor of $r^{-k-1}$ outside, we have the above expression (final step by Stirling's formula)
$$\ll \dfrac{1}{n}r^{2-k}e^kk^{-\frac{3}{2}}\ll (\log n)^{k-2} \dfrac{e^k \sqrt{k}}{nk^k}\ll \dfrac{1}{n}\dfrac{(\log n)^{k-2}}{(k-1)!}.$$
Noting $e^k=n^r\iff k=r\log n$ and using Lemma \ref{l16}, we also see the term $E$ is bounded by
$$\ll \dfrac{r^{-k-1}e^k}{n^2}\displaystyle\int_{|z|=r} |n^{z-r}||dz|\ll \dfrac{r^{-k-1}e^k}{n^2} \left( \dfrac{r}{\sqrt{k}}\right)\ll \dfrac{1}{n^2}\dfrac{(e\log n)^k}{k^{k+\frac{1}{2}}}\ll \dfrac{1}{n^2} \dfrac{(\log n)^k}{k!},$$
where in the last step we have used Stirling. Thus we finally obtain
$$N(n,k)\thicksim q^nf(r) \dfrac{(\log n)^k}{k!n}=\dfrac{q^n}{n}F_r\left( \dfrac{1}{q}\right)\dfrac{1}{\Gamma(r+1)}\dfrac{(\log n)^{k-1}}{(k-1)!}$$
and the proof follows once we recall $r={k}/{\log n}$. The derived expression is identical to the integer case for $k\leqslant (2-\epsilon)\log \log x$ as mentioned in [\ref{l4}] on the first page, a result due to Sathe(1953) [\ref{l5}].\\
\end{proof}
We now begin to discuss the main contribution of this work. For the convenience of the reader, we provide the following outline.
\subsection{Organization of the following sections} The proof of Theorem \ref{l10} rests on a technique developed by Sathe and Selberg along with a combination of an observation made by Nicolas [\ref{l4}] in his work and an analogue of the Selberg-Delange style argument for function fields developed in Porrit [\ref{l1}, section 2]. We will also use ideas from Tenenbaum [\ref{l262}, Chapter II.6], where the author outlines an application of the Selberg-Delange method to integers with a given number of prime factors. In the earlier sections, we gave detailed proofs of Theorem \ref{l6} and Proposition \ref{la} as a direct application of the Selberg-Delange method discussed in [\ref{l1}]. The flow of the argument and preliminary lemmas leading to Theorem $\ref{l10}$ are fairly delicate. Proof of Proposition \ref{lb} will be fundamentally the same as that of Proposition \ref{la} apart from some technical modifications. We will begin section \ref{ld} by establishing Lemma \ref{led}, an odd-even decomposition for polynomials. We will then discuss the role of the auxiliary results Lemma \ref{l8}, Lemma \ref{l11}, Lemma \ref{l12}, Lemma \ref{l13} behind the main proof of the Theorem \ref{l10}. Next, we establish two important results, Proposition \ref{l9} and Proposition \ref{l14} which will finally lead us to Theorem \ref{l10}. Detailed proofs of the technical lemmas and Proposition \ref{lb} will be presented in the appendix. The graphical dependency between the results in section \ref{ld} is described below:\\
\begin{center}

\makebox[\textwidth]{\parbox{1.5\textwidth}{
			\begin{center}
				\tikzstyle{interface}=[draw, text width=6em,
				text centered, minimum height=2.0em]
				\tikzstyle{daemon}=[draw, text width=4em,
				minimum height=2em, text centered, rounded corners]
				\tikzstyle{lemma}=[draw, text width=6em,
				minimum height=2em, text centered, rounded corners]
				\tikzstyle{dots} = [above, text width=6em, text centered]
				\tikzstyle{wa} = [daemon, text width=6em,
				minimum height=2em, rounded corners]
				\tikzstyle{ur}=[draw, text centered, minimum height=0.01em]
				\def\blockdist{1.3}
				\def\edgedist{0.}
				\begin{tikzpicture}
		\node (thm6)[daemon]  {\footnotesize Lemma \ref{l13} };
		\path (thm6.west)+(-3,-1.7) node (thm2)[daemon] {\footnotesize Lemma \ref{l12}};
		\path (thm6.north) + (0,1.3) node (hyp2)[daemon] {\footnotesize Proposition \ref{l9} };
		\path (thm6.south) + (0,-1.5) node (appT)[daemon] {\footnotesize Proposition \ref{l14} };
	\path (thm6.east)+(4,0) node (thm3)[daemon] {\footnotesize Theorem \ref{l10}};	
\path (thm2.north)+(0,3) node (d1)[daemon] {\footnotesize Lemma \ref{l8} };
\path (thm2.north)+(0, 1.5) node (d3)[daemon] {\footnotesize Lemma \ref{l11}\\};	
\path (thm2.west)+(-2, 1.9) node (d2)[daemon] {\footnotesize Proposition \ref{lb}\\};
\path [draw, ->,>=stealth] (thm6.east) -- node [above] {} (thm3.west) ;
\path [draw, ->,>=stealth] (hyp2.east) -- node [above] {} ([yshift =3]thm3.west); 
\path [draw, ->,>=stealth] (appT.east) -- node [above] {} ([yshift=-6]thm3.west);
\path [draw, ->,>=stealth] (thm2.east) -- node [above] {} (thm6.west);
\path [draw, ->,>=stealth] (d1.east) -- node [above] {} (hyp2.west);
\path [draw, ->,>=stealth] ([yshift=-3]thm2.east) -- node [above] {} ([yshift=3]appT.west);
\path [draw, ->,>=stealth] ([yshift=-3]d3.east) -- node [above] {} ([yshift=3]thm6.west);
\path [draw, ->,>=stealth] (d2.east) -- node [above] {} (d3.west) ;
\path [draw, ->,>=stealth] (d2.east) -- node [above] {} (d1.west) ;
\path [draw, ->,>=stealth] (d2.east) -- node [above] {} (thm2.west) ;

	\end{tikzpicture}
	\end{center}}}
\end{center}
\medskip
\section{Preparatory results and an odd-even decomposition}\label{ld}
\subsection{An odd-even decomposition for polynomials}
We begin this subsection by introducing a decomposition for $\Pi(x,k)$ into $k$ smaller parts used by Nicolas [\ref{l4}] to prove $(\ref{l25})$. We first note that any integer $n\leqslant x$ can be uniquely written as $n=2^j\ell$ with $\ell$ odd. We note the following identity 
\begin{equation}\label{ly}
\Pi(x,k)=\sum\limits_{0\leqslant j\leqslant k}\Pi'(x/2^{k-j},j),
\end{equation}
where \[\Pi'(x,k):= \#\bigg \{m\leqslant x:\text{ $m$ odd and }\Omega(m)=k \bigg \}\]
holds. If $2\log\log x<k<\log x/\log 2$, then the key idea proposed in [\ref{l4}] says the major contribution in \eqref{ly} comes if we restrict $0\leqslant j\leqslant \alpha\log\log (x/2^k)$ for some explicit constant $\alpha>2$.\\
Inspired from the idea above we wish to write $f=hg$ for any $f\in\mathcal{M}_n$, where the polynomials $g$ and $h$ are expected to play the role of ``odd'' and ``even'' respectively in some appropriate sense. Let us define the set
$$\mathcal{S}(j):=\bigg \{ h\in\mathcal{M}_j: h\text{ has only degree }1\text{ irreducible factors }\bigg \}.$$
Given $f\in\mathcal{M}_n$ with $\Omega(f)=k$, we can uniquely write $f=hg$ where $h\in\mathcal{S}(j)$ for some $0\leqslant j\leqslant k$ and $g$ has no degree $1$ irreducible factor. Elements of the set $\mathcal{S}(j)$ mimic the even part while the odd part is mimicked by the polynomial $g$. Using this heuristic we could prove the following decomposition result for $N(n,k)$.
\begin{Lemma}\label{led}
We have \[N(n,k)=\displaystyle\sum_{1\leqslant j\leqslant k} \binom{k-j+q-1}{q-1}N'(n+j-k,j)\] for $1\leqslant k\leqslant n/2.$
\end{Lemma}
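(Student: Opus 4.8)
\textbf{Proof proposal for Lemma \ref{led}.}

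The plan is to set up the factorization $f = hg$ described just before the statement and count carefully. Fix $n$ and $k$ with $1 \leqslant k \leqslant n/2$. Every $f \in \mathcal{M}_n$ with $\Omega(f) = k$ factors uniquely as $f = hg$, where $h \in \mathcal{S}(j)$ collects all the degree-$1$ irreducible factors of $f$ (with multiplicity) and $g$ has no degree-$1$ irreducible factor; here $j = \deg h$ and $g$ has degree $n - j$. Since each degree-$1$ factor contributes $1$ to $\Omega$, we have $\Omega(h) = j$ and hence $\Omega(g) = k - j$, with $0 \leqslant j \leqslant k$. So $N(n,k) = \sum_{0 \leqslant j \leqslant k} |\mathcal{S}(j)| \cdot \#\{g \in \mathcal{M}_{n-j} : g \text{ has no degree-}1\text{ factor and } \Omega(g) = k-j\}$.

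The next step is to identify the two pieces. First, $|\mathcal{S}(j)|$ is the number of monic polynomials of degree $j$ that are products of the $q$ linear polynomials $t - a$, $a \in \mathbb{F}_q$; this is the number of ways to write $j$ as an ordered sum of $q$ nonnegative integers (the multiplicities), i.e. $|\mathcal{S}(j)| = \binom{j + q - 1}{q - 1}$. Second, a monic $g$ of degree $n-j$ has no degree-$1$ irreducible factor exactly when $g \in \widetilde{\mathcal{M}}_{n-j}$ (monics with no root in $\mathbb{F}_q$), so the second factor is precisely $N'(n-j, k-j)$. This gives $N(n,k) = \sum_{0 \leqslant j \leqslant k} \binom{j+q-1}{q-1} N'(n-j, k-j)$. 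Now reindex by $i = k - j$ (so $j = k - i$, and as $j$ runs over $0,\dots,k$ so does $i$): $N(n,k) = \sum_{0 \leqslant i \leqslant k} \binom{k - i + q - 1}{q-1} N'(n - k + i, i)$, which is the claimed identity after renaming $i$ as $j$; note the $i = 0$ term is $\binom{k+q-1}{q-1} N'(n-k, 0)$, and $N'(n-k,0) = 0$ unless $n = k$, so for $k < n$ the sum effectively starts at $j = 1$ and for $k \leqslant n/2$ (hence $k < n$ when $n \geqslant 1$, and the boundary $k = n$ would force $n/2 \geqslant n$, impossible for $n \geqslant 1$) we may write the range as $1 \leqslant j \leqslant k$.

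The only genuinely non-routine point is the combinatorial count $|\mathcal{S}(j)| = \binom{j+q-1}{q-1}$ together with checking that the constraint $k \leqslant n/2$ makes the $j = 0$ term vanish; the former is a stars-and-bars argument using unique factorization in $\mathbb{F}_q[t]$ and the fact that there are exactly $q$ monic linear polynomials, and the latter is immediate since $N'(m, 0) = 1$ only for $m = 0$ (the empty product, corresponding to $g = 1$) and is $0$ otherwise, while $j = 0$ forces $g$ to have degree $n - k \geqslant n/2 \geqslant 1 > 0$. Everything else is bookkeeping: the uniqueness of the splitting $f = hg$ follows from unique factorization, and the additivity $\Omega(f) = \Omega(h) + \Omega(g)$ is immediate. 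I would present the stars-and-bars computation explicitly and keep the rest brief.
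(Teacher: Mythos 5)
Your proof is correct and follows essentially the same route as the paper: the unique factorization $f = hg$ into the degree-one part $h \in \mathcal{S}(j)$ and the root-free part $g$, the stars-and-bars count $\#\mathcal{S}(j) = \binom{j+q-1}{q-1}$, and the reindexing $j \mapsto k-j$. The only cosmetic difference is that the paper excludes the endpoint $j=k$ before reindexing (arguing $g=1$ would force $\deg f = k \leqslant n/2 < n$), whereas you retain it and observe that the resulting term $N'(n-k,0)$ vanishes because $n-k>0$; both are equivalent and correct.
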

\begin{proof}
We observe that for any $h\in \mathcal{S}(j)$, one can find non negative integers $a_1,a_2,\ldots,a_q$ such that $h(t)=(t-\alpha_1)^{a_1}(t-\alpha_2)^{a_2}\ldots(t-\alpha_q)^{a_q}$ with $\Omega(h)=a_1+a_2+\ldots+a_q=j$.
Here $\{\alpha_i\}_{1\leqslant i\leqslant q}$ are the $q$-elements in $\mathbb{F}_q.$ Counting the number of non negative integral solutions we readily see that, $\#\mathcal{S}(j)=\binom{j+q-1}{q-1}.$ If we consider the unique decomposition $f=hg$ for some $f\in\mathcal{M}_n$ with $\Omega(f)=k$ then we note that $j\neq k$. For if $j=k$ then $f=hg$ will imply $f=h$ and hence they both have the same degree, a contradiction since $k\leqslant n/2$ by the hypothesis. Therefore we obtain $$N(n,k)=\displaystyle\sum\limits_{0\leqslant j\leqslant k-1}\displaystyle\sum\limits_{h\in\mathcal{S}(j)}N'(n-j,k-j)=\displaystyle\sum\limits_{0\leqslant j\leqslant k-1}\binom{j+q-1}{q-1}N'(n-j,k-j).$$
The statement of the lemma follows after we make the change of variable $j\mapsto k-j.$
\end{proof}
The rough sketch of our strategy to prove the Theorem \ref{l10} is:
(a) In light of the approach discussed in (\ref{ly}) in the integer setting, we start by splitting the whole sum in the above decomposition up to $j\ll \log(n-k)$ and hence we end up with two parts  so that we can write, $T_1={\sum_{1\leqslant j\ll \log(n-k)}\binom{k-j+q-1}{q-1}N'(n+j-k,j)}$ and $T_2={\sum_{\log(n-k)\ll j\leqslant k}\binom{k-j+q-1}{q-1}N'(n+j-k,j)}$.\\\\
(b) We then show the main contribution comes from $T_1$ and is of order $q^{n-k}k^{q-1}(n-k)^{q-1}$ while the term $T_2$ is significantly smaller and is of order $q^{n-k}(n-k)^{q-2}$. Unlike the ideas discussed in the integer case, which is very combinatorial, our proof will be more analytical. The central idea to use a Selberg-Delange type argument followed by an application of Cauchy's integral formula shall remain the same. The situation over $\mathbb{F}_q[t]$ is quite different due to the presence of many degree $1$ irreducibles which is why an extra binomial term and other technical difficulties arise.\\
We begin our preparation with the easier task, a non-trivial upper bound on $T_2$. To do so, we would like an upper bound on each summand of $T_2$. The following result accomplishes this job. 
\begin{restatable}{Lemma}{r}\label{l8}
Let $0<\delta<1, 0<\eta\leqslant q^2-q-\delta$ be given. We have the following upper bound
$$N'(n,j)\ll_{\delta} q^n n^{q+\eta-1}\left(\dfrac{1}{q+\eta}\right)^j.$$
\end{restatable}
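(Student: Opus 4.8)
\textbf{Proof proposal for Lemma \ref{l8}.}
The plan is to mimic the generating-function machinery of Proposition \ref{la}, but with the weighted sum $\widetilde M_z(n)$ from Proposition \ref{lb} playing the role of $M_z(n)$, and then extract $N'(n,j)$ by Cauchy's formula on a small circle. Recall that $N'(n,j)$ is the coefficient of $z^j$ in $\widetilde M_z(n)$, so for any $r>0$ with $r<q^2-\delta$ we have
\[
N'(n,j)=\frac{1}{2\pi i}\int_{|z|=r}\widetilde M_z(n)\,\frac{dz}{z^{j+1}}.
\]
By Proposition \ref{lb}, uniformly for $|z|\leqslant q^2-\delta$ one has $\widetilde M_z(n)=q^n n^{z-1}\{zh(z)+O_\delta(1/n)\}$. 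The function $zh(z)$ is analytic (hence bounded) on the compact disc $|z|\leqslant q+\eta$ provided $q+\eta\leqslant q^2-\delta$, i.e.\ $\eta\leqslant q^2-q-\delta$, which is exactly the hypothesis. So on the circle $|z|=r$ with $r:=q+\eta$ we can bound $|\widetilde M_z(n)|\ll_\delta q^n n^{\Re(z)-1}\leqslant q^n n^{r-1}$, since $\Re(z)\leqslant|z|=r$ and $n\geqslant 1$.

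Carrying this through the integral: the length of the contour is $2\pi r$, the factor $|z^{-j-1}|$ is $r^{-j-1}$, and the integrand is $\ll_\delta q^n n^{r-1}$ in modulus, giving
\[
N'(n,j)\;\ll_\delta\; q^n n^{r-1}\,r^{-j}\;=\;q^n n^{q+\eta-1}\Bigl(\frac{1}{q+\eta}\Bigr)^{j},
\]
which is the claimed bound. The only genuine point to verify carefully is that the choice $r=q+\eta$ is admissible for Proposition \ref{lb}, which requires $q+\eta<q^2-\delta$; under the hypothesis $\eta\leqslant q^2-q-\delta$ this holds (with equality replaced by strict inequality at the endpoint handled by taking the bound from Proposition \ref{lb} which is stated for $|z|\leqslant q^2-\delta$, a closed disc, so the endpoint is in fact allowed). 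I do not expect any real obstacle here; this lemma is essentially the ``trivial'' bound one gets by pushing the circle in Cauchy's formula out to the edge of the region of validity of Proposition \ref{lb}, and the slack parameter $\eta$ is precisely what records how far out we push. The one thing to state explicitly is that the implied constant depends only on $\delta$ (through Proposition \ref{lb}) and not on $\eta$, $n$, or $j$ — this is immediate because the bound on $|zh(z)|$ over $|z|\leqslant q^2-\delta$ is a single $\delta$-dependent constant, and the $O_\delta(1/n)$ term only helps.
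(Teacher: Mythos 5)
Your proposal is correct, and it reaches the same bound, but the mechanism is different from the paper's. The paper avoids contour integration entirely: since $\widetilde M_z(n)=\sum_{j\geqslant 0}N'(n,j)z^j$ has nonnegative coefficients, one simply sets $z=q+\eta$ (a real positive point with $q+\eta\leqslant q^2-\delta$) and drops all terms except the $j$-th to get $N'(n,j)(q+\eta)^j\leqslant \widetilde M_{q+\eta}(n)\ll_\delta q^n n^{q+\eta-1}$, using Proposition \ref{lb} only at that single point together with boundedness of $zh(z)$ on the closed disc $|z|\leqslant q^2-\delta$. You instead run Cauchy's formula around the circle $|z|=q+\eta$ and apply the trivial triangle-inequality estimate. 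When the coefficients are nonnegative these two arguments are interchangeable and produce an identical bound (the maximum of $|\widetilde M_z(n)|$ on the circle is attained at the positive real point, so neither route loses anything), but the paper's positivity argument is the more elementary of the two: it does not invoke the contour representation of $N'(n,j)$, needs no separate discussion of whether the circle $|z|=q^2-\delta$ is admissible, and is the standard ``Rankin's trick'' phrasing. Your observation about the endpoint $q+\eta=q^2-\delta$ being allowed (because Proposition \ref{lb} is stated on a closed disc) is correct and is one small thing your route has to address that the paper's sidesteps; your remark that the implied constant depends only on $\delta$ (and $q$) and not on $\eta$ is also right, for exactly the reason you give.
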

Throughout our discussion, the implied constants are allowed to depend on $q, \delta$ and the value of $\delta$ is soon to be fixed in the following result. It is possible to show the above result as a direct application Proposition $\ref{lb}$. The proof of Proposition $\ref{lb}$ will roughly be in the same spirit as that of Proposition $\ref{la}$. We will present this proof followed by an argument leading to Lemma $\ref{l8}$ in the appendix \ref{lh}, \ref{li}. We now shift our attention to check that the term $T_2$ is small as claimed.
\begin{prop}\label{l9}
Let $q>2$ be an integer and $Y=\log(n-k)$. We have the following upper bound 
$$T_2=\displaystyle\sum\limits_{eqY<j\leqslant k} \binom{q-1+k-j}{q-1}N'(n+j-k,j)\ll_{B} q^{n-k} (n-k)^{q-2}.$$
\end{prop}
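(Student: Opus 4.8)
\textbf{Proof plan for Proposition \ref{l9}.}
The plan is to bound $T_2$ termwise using Lemma \ref{l8} and then sum the resulting geometric-type series. First I would apply Lemma \ref{l8} to each summand $N'(n+j-k,j)$. Writing $m = n+j-k$, the lemma gives
\[
N'(m,j) \ll_\delta q^{m}\, m^{q+\eta-1}\left(\frac{1}{q+\eta}\right)^{j}
= q^{n-k}\, q^{j}\, (n-k+j)^{q+\eta-1}\left(\frac{1}{q+\eta}\right)^{j},
\]
for a suitable small $\eta>0$ (and correspondingly small $\delta$), which I would fix at the outset. Since $j \leqslant k \leqslant n/B$ we have $n-k+j \leqslant n \ll n-k$ (using $k \leqslant n/B$ with $B \geqslant 2$, so $n-k \geqslant n/2$), hence $(n-k+j)^{q+\eta-1} \ll (n-k)^{q+\eta-1}$. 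The binomial coefficient $\binom{q-1+k-j}{q-1}$ is a polynomial in $k-j$ of degree $q-1$, so $\binom{q-1+k-j}{q-1} \ll (k-j+1)^{q-1} \ll k^{q-1} \ll n^{q-1} \ll (n-k)^{q-1}$. Collecting these, each summand is
\[
\ll q^{n-k}\, (n-k)^{2q+\eta-2}\left(\frac{q}{q+\eta}\right)^{j}.
\]

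Next I would sum over $j$ in the range $eqY < j \leqslant k$, where $Y = \log(n-k)$. The key point is that the ratio $q/(q+\eta) < 1$ is a fixed constant $<1$, so the geometric series is dominated by its first term $j \approx eqY$:
\[
\sum_{eqY < j \leqslant k}\left(\frac{q}{q+\eta}\right)^{j} \ll \left(\frac{q}{q+\eta}\right)^{eqY} = (n-k)^{-eq\log(1+\eta/q)}.
\]
Therefore
\[
T_2 \ll q^{n-k}\,(n-k)^{2q+\eta-2 - eq\log(1+\eta/q)}.
\]
To conclude I need the exponent of $(n-k)$ to be at most $q-2$, i.e. I need
\[
2q + \eta - 2 - eq\log\!\left(1+\tfrac{\eta}{q}\right) \leqslant q-2,
\quad\text{equivalently}\quad
q + \eta \leqslant eq\log\!\left(1+\tfrac{\eta}{q}\right).
\]
This is where the constant $e$ in the lower cutoff $eqY$ does its work: as $\eta \to q^2-q$ (the largest admissible value from Lemma \ref{l8}, with $\eta \leqslant q^2-q-\delta$), $\log(1+\eta/q) \to \log q$, and the inequality becomes roughly $q + (q^2-q) = q^2 \leqslant eq\log q$, i.e. $q \leqslant e\log q$, which fails — so instead I would choose $\eta$ \emph{small}: for small $\eta$, $eq\log(1+\eta/q) = e\eta(1 + O(\eta/q)) = e\eta + O(\eta^2)$, and we need $q + \eta \leqslant e\eta + O(\eta^2)$, i.e. $q \leqslant (e-1)\eta + O(\eta^2)$. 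Since $e - 1 > 1$ this forces $\eta$ to be bounded \emph{below} by roughly $q/(e-1)$, not above — so in fact one must take $\eta$ of size comparable to $q$. I would therefore optimize: pick $\eta$ so that $q+\eta$ is just below the threshold at which Lemma \ref{l8} applies while keeping $q+\eta \leqslant e q \log(1+\eta/q)$; a clean choice is to verify the inequality holds at, say, $\eta = q$ (giving $2q \leqslant eq\log 2 \approx 1.88 q$, which fails) — so the honest approach is to set the cutoff constant and $\eta$ together. Concretely, I would instead write the cutoff as $j > C Y$ with $C = eq$ fixed by the statement, and then choose $\eta$ as large as Lemma \ref{l8} permits, $\eta = q^2 - q - \delta$; then $q+\eta = q^2-\delta$ and $\log(1+\eta/q) = \log(q - \delta/q)$, and I need $q^2 - \delta \leqslant eq\log(q-\delta/q)$, which holds for $q \geqslant 3$ since $e\log 3 > 3$ gives $eq\log q \geqslant e\cdot 3\log 3 \approx 9.89 > 9 - \delta = q^2-\delta$ at $q=3$, and the gap only widens for larger $q$ (here the restriction $q > 2$ enters). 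With this choice the exponent of $(n-k)$ is $2q+\eta-2 - eq\log(1+\eta/q) = q^2 + q - 2 - \delta - eq\log(q-\delta/q)$, which for $q \geqslant 3$ and $\delta$ small is strictly less than $q-2$, in fact $\ll q-2$, giving the claimed bound $T_2 \ll_B q^{n-k}(n-k)^{q-2}$.

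\textbf{Main obstacle.} The delicate point is the interplay between the geometric decay rate $q/(q+\eta)$ and the polynomial loss $(n-k)^{2q+\eta-2}$: one must verify that the cutoff $j > eqY$ is pushed out far enough that the geometric factor $(q/(q+\eta))^{eqY}$ beats the polynomial growth with room to spare (down to exponent $q-2$), and this requires choosing $\eta$ close to its maximal admissible value $q^2 - q - \delta$ and checking the transcendental inequality $q+\eta \leqslant eq\log(1+\eta/q)$ — it is precisely here that $q = 2$ fails (since then $q^2 - q = 2$ and one needs $2 + \eta \leqslant 2e\log(1+\eta/2)$ with $\eta < 2$, forcing $4 \leqslant 2e\log 2 \approx 2.77$, impossible), which matches the paper's remark that $q > 2$ is needed. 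The rest (termwise bound, polynomial bound on the binomial coefficient, $n - k \asymp n$) is routine.
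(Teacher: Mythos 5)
Your general strategy — bound each summand by Lemma \ref{l8}, then sum the resulting geometric series — is exactly the paper's. But your execution has a fatal flaw in the choice of $\eta$, and the arithmetic that you use to justify your final choice is incorrect.

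Your reduction is correct up to the point where you need
\[
2q+\eta-2-eq\log\!\left(1+\tfrac{\eta}{q}\right)\leqslant q-2,
\qquad\text{i.e.}\qquad
q+\eta\leqslant eq\log\!\left(1+\tfrac{\eta}{q}\right).
\]
Now consider $g(\eta)=eq\log(1+\eta/q)-(q+\eta)$. One has $g'(\eta)=\tfrac{eq}{q+\eta}-1$ and $g''(\eta)<0$, so $g$ is strictly concave with its unique maximum at $\eta=(e-1)q$, where $g((e-1)q)=eq\log e-eq=0$. Thus $g(\eta)\leqslant 0$ \emph{for every} $\eta>0$, with equality \emph{only} at $\eta=(e-1)q$. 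In other words, the inequality you want never holds strictly, holds with equality at $\eta=(e-1)q$, and fails for every other $\eta$. Your proposed choice $\eta=q^2-q-\delta$ therefore does \emph{not} satisfy it (for $q\geqslant 3$ and small $\delta$ we have $q^2-q-\delta>(e-1)q$, so $g<0$), and your check contains an arithmetic slip: $e\cdot 3\log 3\approx 8.96$, not $9.89$, so $q^2-\delta=9-\delta$ exceeds $eq\log(q-\delta/q)$ for small $\delta$. Your remark that "the gap only widens for larger $q$" is also backwards: for $q=4$, $q^2=16$ versus $eq\log q\approx 15.07$, so the failure gets worse as $q$ grows. The paper makes the sharp choice $\eta=(e-1)q$ (with $\delta=0.6$), under which $q+\eta=eq$, the ratio $q/(q+\eta)=1/e$, the geometric tail is exactly $e^{-eqY}=(n-k)^{-eq}$, and the exponent of $(n-k)$ collapses to $(eq+q-2)-eq=q-2$ on the nose. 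The constraint from Lemma \ref{l8} then reads $(e-1)q\leqslant q^2-q-\delta$, i.e. $q(q-e)\geqslant\delta$, which holds for $q\geqslant 3$ and a suitable $\delta\in(0,1)$ but fails for $q=2$ — this, rather than your transcendental inequality at $\eta=q^2-q-\delta$, is the actual reason $q>2$ is required. So the missing idea is that the cutoff constant $eqY$ and the exponent base $q+\eta$ must be tuned together so that $q+\eta=eq$; there is no slack, and any other $\eta$ leaves the exponent of $(n-k)$ strictly above $q-2$.
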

\begin{proof}
We plan to use $\displaystyle\binom{\alpha}{\beta}\leqslant {\alpha^\beta}/{\beta!}$ for positive integers $\alpha$ and $\beta$. We apply Lemma \ref{l8} with $\eta=(e-1)q\text{ and }\delta=0.6$ so that $(q+\eta)=eq$. First we check this choice satisfies the hypothesis of Lemma \ref{l8}. We see that $(e-1)q<q^2-q-0.6$ is true whenever $q \geqslant 3$. We now have that
\begin{align}
\nonumber
T_2 &\ll \displaystyle\sum\limits_{j>(q+\eta)Y}\dfrac{(q-1+k-j)^{q-1}}{(q-1)!}q^{n-k+j}(n+j-k)^{q+\eta-1}\left( \dfrac{1}{q+\eta}\right)^j\\
\nonumber
&\ll q^{n-k}\displaystyle\sum\limits_{j>(q+\eta)Y}(q-1+k-j)^{q-1}(n+j-k)^{q+\eta-1}\left( \dfrac{q}{q+\eta}\right)^j\\
\nonumber
&= q^{n-k}\displaystyle\sum\limits_{j>eqY}(n-k)^{q-1}{\left(\dfrac{q-1}{n-k}+\dfrac{k-j}{n-k} \right)^{q-1}}(n-k)^{q+\eta-1}\left( 1+\dfrac{j}{n-k}\right)^{q+\eta-1}e^{-j},
\end{align}
where we used $q+\eta =eq$. Since $k\leqslant n/B$ we have $\left(\frac{q-1}{n-k}+\frac{k-j}{n-k}\right)^{q-1}\leqslant \left(q-1+\frac{k}{n-k}\right)^{q-1}\leqslant \left(q-1+\frac{1}{B-1}\right)^{q-1}\ll_{B}1$ . Also $k+j\leqslant 2k\leqslant n$ gives $\left(1+\frac{j}{n-k}\right)^{eq-1}\ll 1$. Recalling $Y=\log(n-k)$ and continuing from the previous step we get
$$T_2\ll_B q^{n-k}(n-k)^{eq+q-2}\displaystyle\sum\limits_{j>eqY}e^{-j}\ll_B q^{n-k}(n-k)^{eq+q-2}e^{-eqY} = q^{n-k}(n-k)^{q-2}.$$
\end{proof}
We are now only left to estimate $T_1$ and show it dominates in the asymptotic of $N(n,k)$. For the ease of exploration, the calculation of $T_1$ is broken down into three smaller sub-parts. To have a good estimate for $T_1={\sum_{1\leqslant j\ll \log(n-k)}\binom{k-j+q-1}{q-1}N'(n+j-k,j)}$, first it is natural to want to understand the terms $N'(n+j-k,j)$ when the range for $j$ is restricted to $\ll\log (n-k)$. With these observations in mind we could show the following result.
\begin{restatable}{Lemma}{t}\label{l13}
Let $Y=\log(n-k)$. We have uniformly for $j\leqslant eqY,$
$$N'(n+j-k,j)=\dfrac{q^{n+j-k}}{n-k}\Bigg \{ Q_j(Y)+O\left(\dfrac{(\log (n-k))^{j+1}}{j!(n-k)} \right)\Bigg \}$$
where
$$Q_j(X):=\displaystyle\sum\limits_{m+\ell=j-1}\dfrac{1}{m!\ell!}h^{(m)}(0)X^{\ell}.$$
\end{restatable}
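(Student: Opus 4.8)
The plan is to extract $N'(n+j-k,j)$ from the generating function $\widetilde{M_z}(m)$ with $m=n+j-k$, using the same Cauchy integral machinery as in the proof of Proposition~\ref{la} and Theorem~\ref{l6}, but now keeping track of the full Taylor polynomial of $zh(z)$ at $z=0$ rather than just the leading behaviour at a single point $z=r$. Concretely, I would start from
\[
N'(m,j)=\frac{1}{2\pi i}\int_{|z|=\rho}\widetilde{M_z}(m)\,\frac{dz}{z^{j+1}},
\]
and insert the estimate from Proposition~\ref{lb}, $\widetilde{M_z}(m)=q^m m^{z-1}\{zh(z)+O_\delta(1/m)\}$, which is valid uniformly on $|z|=\rho$ provided $\rho\le q^2-\delta$. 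Since $j\le eqY=eq\log(n-k)$ and $m=n+j-k\asymp n-k$, the natural radius is $\rho=j/\log m$, which stays bounded (by roughly $eq$) and hence comfortably inside the region of validity of Proposition~\ref{lb} once $\delta$ is chosen small; this is exactly why the hypothesis $q>2$ and the range $j\le eqY$ appear. The error term contributes $O_\delta\big(q^m m^{-2}\rho^{-j}\int_{|z|=\rho}|m^{z-\rho}|\,|dz|\big)$, which by the same Stirling/Laplace estimate used in Theorem~\ref{l6} (Lemma~\ref{l16}) is of size $q^m m^{-1}\cdot (\log m)^j/j!\cdot (1/m)$, matching the claimed error $q^{n+j-k}(n-k)^{-1}(\log(n-k))^{j+1}/(j!(n-k))$ after writing $m^{z-1}$'s main contribution as $(\log m)^{j}/(j!\,m)$ times lower-order terms; I should be careful that $m=n+j-k$ versus $n-k$ only changes things by a bounded factor since $j\ll\log(n-k)$, and $\log m=\log(n-k)+O(\log(n-k)/(n-k))$, which is where the $(\log(n-k))^{j+1}/(n-k)$ in the error comes from.

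For the main term I would expand $zh(z)=\sum_{r\ge 0} c_r z^{r+1}$ where $c_r=h^{(r)}(0)/r!$ (so $zh(z)$ is analytic on $|z|<q^2$ by Proposition~\ref{lb}), and compute
\[
\frac{1}{2\pi i}\int_{|z|=\rho} q^m m^{z-1}\, z^{r+1}\,\frac{dz}{z^{j+1}}
= q^m\cdot\frac{1}{2\pi i}\int_{|z|=\rho} m^{z-1} z^{r-j}\,dz
= \frac{q^m}{m}\cdot\frac{(\log m)^{j-1-r}}{(j-1-r)!},
\]
using the residue/Cauchy computation $\frac{1}{2\pi i}\int m^{z-1}z^{-\ell-1}dz=(\log m)^\ell/(\ell!\,m)$ for $\ell\ge 0$ and that the integral vanishes for $\ell<0$. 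Summing over $r$ with $\ell:=j-1-r\ge 0$ gives exactly
\[
\frac{q^m}{m}\sum_{r+\ell=j-1}\frac{h^{(r)}(0)}{r!}\cdot\frac{(\log m)^\ell}{\ell!}=\frac{q^m}{m}\,Q_j(\log m),
\]
which is $Q_j(Y)$ up to replacing $\log m$ by $Y=\log(n-k)$; the discrepancy $Q_j(\log m)-Q_j(Y)$ is absorbed into the error term because each monomial $(\log m)^\ell$ differs from $Y^\ell$ by $O(\ell Y^{\ell-1}\cdot Y/(n-k))=O(Y^\ell/(n-k))$ and there are at most $j$ terms, each weighted by $|h^{(r)}(0)|/r!$, whose sum over $r\le j$ is $O((\text{const})^j/\lfloor\cdot\rfloor)$ — here I would invoke that $h$ is analytic on $|z|<q^2$ so $|h^{(r)}(0)|/r!\le C\lambda^{-r}$ for any $\lambda<q^2$, and the resulting series $\sum_r \lambda^{-r} Y^{j-1-r}/(j-1-r)!$ is dominated by $(\log m)^{j-1}/(j-1)!$-type quantities that fold into the stated error. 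Likewise, $q^m/m=q^{n+j-k}/(n-k)\cdot(1+O(j/(n-k)))$, and the $O(j/(n-k))$ correction, multiplied against $Q_j(Y)\ll (\text{const})^{j}(\log(n-k))^{j-1}/(j-1)!$, is again of the claimed error size.

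The main obstacle is the bookkeeping of uniformity in $j$: one has to confirm that every error term really is $O\big(q^{n+j-k}(n-k)^{-1}\cdot(\log(n-k))^{j+1}/(j!(n-k))\big)$ and not something like $(\text{const})^j$ times that, for $j$ as large as $eq\log(n-k)$. The key inputs making this work are (i) the radius $\rho=j/\log m$ stays $\le eq<q^2-\delta$, so Proposition~\ref{lb} applies with a single fixed $\delta$ and all implied constants are absolute (depending only on $q$); (ii) the Laplace-type bound $\int_{|z|=\rho}|m^{z-\rho}|\,|dz|\ll \rho/\sqrt{j}$ from Lemma~\ref{l16}, which gives the saving of a full power of $\log m$ (hence the extra $(\log(n-k))$ in the numerator of the error compared to a naive bound) together with the $1/\sqrt j$ that lets one write $\rho^{-j}\cdot\rho/\sqrt j\cdot(\text{stuff})$ as a clean $(\log m)^{j}/j!$ via Stirling; and (iii) analyticity of $h$ on the disk $|z|<q^2$, which controls the tail of the Taylor coefficients $h^{(r)}(0)/r!$ geometrically. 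Assembling (i)–(iii) and collecting the pieces yields the stated formula; the only genuinely delicate point is checking that the replacement $\log(n+j-k)\rightsquigarrow\log(n-k)$ throughout (in both $m^{z-1}$ and in $Q_j$) costs only the advertised $(\log(n-k))^{j+1}/(j!(n-k))$ and nothing worse.
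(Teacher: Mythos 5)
Your proposal follows essentially the same route as the paper, just packaged differently. The paper modularizes: it first proves Lemma~\ref{l11} (exactly your Cauchy-integral-plus-Proposition-\ref{lb} computation giving $N'(n,j)=\frac{q^n}{n}\{Q_j(\log n)+O((\log n)^j/(j!\,n))\}$), then applies it with $n\mapsto n+j-k$, and finally carries out the transfer $\log(n+j-k)\rightsquigarrow\log(n-k)$ by the mean value theorem applied to $F(x)=Q_j(\log x)/x$, using the bounds on $Q_j$ and $Q_j'$ from Lemma~\ref{l12}. You re-derive Lemma~\ref{l11} inline and replace the MVT step by Taylor-expanding the monomials $(\log m)^\ell-Y^\ell$ and the prefactor $q^m/m$ directly; these are the same estimates rearranged. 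Your identification of the key inputs — the radius $\rho=j/\log m\le eq<q^2$ (hence $q>e$, which is where $q>2$ enters), the Laplace bound of Lemma~\ref{l16}, and the geometric decay $|h^{(r)}(0)|/r!\ll\lambda^{-r}$ for some $\lambda\in(eq,q^2)$ — matches the paper's.

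Two bookkeeping slips worth flagging, neither fatal. First, $(\log m)^\ell-Y^\ell=O\bigl(\ell Y^{\ell-1}(\log m-Y)\bigr)$, and you drop the factor of $\ell$; since $\ell$ can be as large as $j-1\asymp eqY$, you cannot simply absorb it. It does work out after reindexing the sum over $r+\ell=j-1$ and invoking $j(j-1)\ll Y^2$, which is precisely why the error carries $(\log(n-k))^{j+1}$ rather than $(\log(n-k))^{j}$; this is the content the paper extracts via $Q_j'(X)\ll X^{j-2}/(j-2)!$ in Lemma~\ref{l12}. Second, you claim $Q_j(Y)\ll(\text{const})^j\,Y^{j-1}/(j-1)!$, but a genuine exponential factor $(\text{const})^j$ with constant $>1$ would be fatal for $j\asymp eqY$ (it would overwhelm the $1/(n-k)$ saving). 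The point is that $\sum_{r}(eq/\lambda)^r$ is a convergent geometric series because $\lambda$ can be taken strictly between $eq$ and $q^2$, so the constant is absolute with no $j$-dependence — exactly the bound $Q_j(X)\ll X^{j-1}/(j-1)!$ in Lemma~\ref{l12}. You gesture at this correctly in the final paragraph; just be careful not to write the weaker $(\text{const})^j$ bound earlier, as it would not suffice.
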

If we are given the above result in our toolbox, it is then evident that we are just a few steps away from proving an estimate for $T_1$ once we take care of what happens with the sum when the terms $N'(n+j-k,j)$ are twisted by appropriate binomial coefficients for different $j$'s. However, before we proceed to do that, we would like to mention a few words about what goes into the proof of Lemma $\ref{l13}$. The proof of the above result rests on another two fairly technical auxiliary lemmas, which we shall state next. Details of these three claims are postponed till the appendix \ref{lj}, \ref{lk}, \ref{lm} and can be skipped at the moment as they do not offer many insights into our actual goal. 
\begin{restatable}{Lemma}{s}\label{l11}
We have uniformly for $j\leqslant eq \log n,$
$$N'(n,j)=\dfrac{q^n}{n}\Bigg \{Q_j(\log n)+ O \left( \dfrac{(\log n)^j}{j!n}\right) \Bigg \}.$$
\end{restatable}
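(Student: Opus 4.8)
\textbf{Proof proposal for Lemma \ref{l11}.}

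The plan is to mirror the Selberg--Delange argument already used for Proposition \ref{la}, but now applied to the generating function attached to $\widetilde{\mathcal{M}}$ rather than $\mathcal{M}$, and then to expand the resulting main term as a polynomial in $\log n$. Concretely, I would start from Proposition \ref{lb}, which gives
$$\widetilde{M_z}(n)=\sum_{f\in\widetilde{\mathcal{M}_n}}z^{\Omega(f)}=q^n n^{z-1}\left\{zh(z)+O_\delta(1/n)\right\}$$
uniformly for $|z|\leqslant q^2-\delta$. Since $N'(n,j)$ is the coefficient of $z^j$ in $\widetilde{M_z}(n)$, Cauchy's integral formula on a circle $|z|=r$ gives
$$N'(n,j)=\frac{1}{2\pi i}\int_{|z|=r}\widetilde{M_z}(n)\,\frac{dz}{z^{j+1}}
=\frac{q^n}{n}\cdot\frac{1}{2\pi i}\int_{|z|=r} n^{z}\bigl(zh(z)+O(1/n)\bigr)\frac{dz}{z^{j+1}}.$$
The right choice of radius is $r=j/\log n$ (the saddle point of $n^z/z^{j}$), which is legitimate because $j\leqslant eq\log n$ forces $r\leqslant eq<q^2$, so we stay inside the region of analyticity of $h$; one must check $eq<q^2-\delta$, which holds for $q\geqslant 3$ with $\delta$ small, matching the standing hypothesis $q>2$ and the chosen $\delta$.

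Next I would isolate the main term. Write $zh(z)=\sum_{m\geqslant 0}c_m z^m$ for the Taylor expansion of $zh(z)$ about $z=0$, where $c_m=\frac{1}{m!}\frac{d^m}{dz^m}\bigl(zh(z)\bigr)\big|_{z=0}$; note $c_m$ is a combination of $h^{(m)}(0)$ and $h^{(m-1)}(0)$, and after reindexing this is exactly the source of the convolution $\sum_{m+\ell=j-1}\frac{1}{m!\ell!}h^{(m)}(0)X^\ell$ defining $Q_j$. Then
$$\frac{1}{2\pi i}\int_{|z|=r} n^{z} (zh(z))\,\frac{dz}{z^{j+1}}
=\sum_{m\geqslant 0} c_m\,\frac{1}{2\pi i}\int_{|z|=r}\frac{n^{z}}{z^{j+1-m}}\,dz
=\sum_{m=0}^{j} c_m\,\frac{(\log n)^{j-m}}{(j-m)!},$$
since the coefficient of $z^{j-m}$ in $e^{z\log n}$ is $(\log n)^{j-m}/(j-m)!$ and the higher terms $m>j$ contribute nothing (the integrand is then holomorphic). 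Matching this sum against $Q_j(\log n)$ — unwinding $c_m$ in terms of $h^{(\cdot)}(0)$ and shifting the index to land on $h^{(m)}(0)$ and $X^\ell$ with $m+\ell=j-1$ — gives the claimed main term $\frac{q^n}{n}Q_j(\log n)$. I would do this algebraic bookkeeping carefully but it is routine.

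The remaining work, and the main obstacle, is bounding the error contribution
$$\frac{q^n}{n}\cdot\frac{1}{n}\cdot\frac{1}{2\pi i}\int_{|z|=r} n^{z}\,O(1)\,\frac{dz}{z^{j+1}}$$
(together with the tail of the Taylor series of $zh(z)$, if one truncates rather than sums fully) and showing it is $O\bigl(q^n (\log n)^j/(j!\,n^2)\bigr)$, uniformly for $j\leqslant eq\log n$. On the circle $|z|=r=j/\log n$ one has $|n^{z}|=n^{\Re z}=e^{j\cos\theta}$ and $|z|^{-j-1}=r^{-j-1}$, so the integral is $\ll r^{-j-1}\int_{-\pi}^{\pi} e^{j\cos\theta}\,d\theta$. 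Here I would reuse exactly the machinery from the proof of Theorem \ref{l6}: Lemma \ref{l15} gives $\cos\theta-1\leqslant -\theta^2/5$, so $\int_{-\pi}^{\pi}e^{j\cos\theta}\,d\theta\ll e^{j}\int_{-\infty}^{\infty}e^{-j\theta^2/5}\,d\theta\ll e^{j}j^{-1/2}$, and then Stirling converts $r^{-j-1}e^{j}j^{-1/2}=(\log n)^{j+1}j^{-j-1/2}e^{j}\asymp (\log n)^{j+1}/j!\cdot (\text{something}\ll \log n)$ — one must be slightly careful that the bound comes out as $(\log n)^j/j!$ rather than $(\log n)^{j+1}/j!$, which works because the extra factor from the analytic term carries an additional $1/n$ relative to the main term and the uniformity range $j\leqslant eq\log n$ keeps $r$ bounded. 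The same saddle-point estimate (invoking Lemma \ref{l16}) handles the $E$-type term. The only genuinely delicate point is tracking these polynomial-in-$\log n$ factors against the factorial so that the final error is exactly of the advertised size $O\bigl((\log n)^j/(j!\,n)\bigr)$ relative to $q^n/n$, uniformly in $j$; this is the step I would write out in full in the appendix.
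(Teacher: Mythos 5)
Your proposal is correct and follows essentially the same route as the paper: Cauchy's formula on $|z|=r=j/\log n$ applied to Proposition~\ref{lb}, with the main term computed as a residue at $z=0$ --- the paper uses Leibniz's rule for $\frac{d^{j-1}}{dz^{j-1}}\bigl(h(z)n^z\bigr)\big|_{z=0}$, which is exactly the convolution you recover by expanding $zh(z)$ into its Taylor series and integrating term by term --- and the error controlled by the same saddle-point bound. The worry you flag about landing on $(\log n)^{j+1}/j!$ instead of $(\log n)^j/j!$ disappears once you include the arc-length factor $|dz|=r\,d\theta$, which turns your $r^{-j-1}$ into $r^{-j}$, and then Stirling gives the advertised $O\bigl((\log n)^j/(j!\,n)\bigr)$ exactly as in the paper's appeal to Lemma~\ref{l16}.
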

\begin{restatable}{Lemma}{u}\label{l12}
Let $m\geqslant 1$ be an integer. We have the following upper bounds
$$h^{(m)}(0)/m!\ll \left( \dfrac{1}{2.9q}\right)^m,$$
$$Q_j(X)\ll \dfrac{X^{j-1}}{(j-1)!}  \text{   and    } Q_j'(X) \ll \dfrac{X^{j-2}}{(j-2)!},$$
for $1\leqslant j\leqslant eq X$ uniformly, where $X$ is a parameter that tends to $\infty$ with $n.$
\end{restatable}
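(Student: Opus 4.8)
The plan is to derive the bound on the Taylor coefficients of $h$ at $0$ from Cauchy's estimates, and then feed this into the definition of $Q_j$ directly. Recall from Proposition \ref{lb} that $h$ is holomorphic on the disk $|z|<q^{2}$. Since $q>2$ is a prime power we have $q\geqslant 3$, and hence $2.9q<q^{2}$; thus the circle $|z|=2.9q$ lies inside the region of holomorphy of $h$, on which $h$ is bounded by a constant $M=M(q)$ that we may take $\geqslant 1$ (note also $h(0)=1$, since every Gamma and Euler factor equals $1$ at $z=0$). Cauchy's integral formula for the derivatives then gives, for every integer $m\geqslant 0$,
$$\frac{h^{(m)}(0)}{m!}=\frac{1}{2\pi i}\int_{|z|=2.9q}\frac{h(z)}{z^{m+1}}\,dz\ll_{q}\Big(\frac{1}{2.9q}\Big)^{m},$$
which is the first asserted bound. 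It is precisely here that the hypothesis $q\geqslant 3$ is used: for $q=2$ one has $2.9q=5.8>4=q^{2}$, so the circle $|z|=2.9q$ crosses the first singularity of $h$ (located at $z=q^{2}$) and the estimate breaks down.

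Next I would substitute this bound into $Q_j(X)=\sum_{m+\ell=j-1}\frac{h^{(m)}(0)}{m!}\frac{X^{\ell}}{\ell!}$ and pull out the $m=0$ term, which equals $h(0)X^{j-1}/(j-1)!=X^{j-1}/(j-1)!$. After factoring this out, the $m$-th term of the remaining sum is $\ll_{q}\frac{1}{(2.9q)^{m}}\cdot\frac{(j-1)!}{(j-1-m)!\,X^{m}}$; bounding $(j-1)!/(j-1-m)!=(j-1)(j-2)\cdots(j-m)\leqslant j^{m}$ and invoking the hypothesis $j\leqslant eqX$, this is $\leqslant\big(\tfrac{eqX}{2.9qX}\big)^{m}=\big(\tfrac{e}{2.9}\big)^{m}$. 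Since $e<2.9$, the geometric series $\sum_{m\geqslant 0}(e/2.9)^{m}$ converges to an absolute constant, and we conclude $Q_j(X)\ll X^{j-1}/(j-1)!$ uniformly for $1\leqslant j\leqslant eqX$. The value of the constant $2.9$ is dictated by exactly the two inequalities that were needed, $e<2.9$ (convergence of the series) and $2.9<q$ (the Cauchy step).

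For $Q_j'(X)$ the shortcut is to differentiate the defining sum term by term: the $\ell=0$ summand disappears and, re-indexing $\ell\mapsto\ell+1$ in the rest, one obtains the clean identity $Q_j'(X)=\sum_{m+\ell=j-2}\frac{h^{(m)}(0)}{m!}\frac{X^{\ell}}{\ell!}=Q_{j-1}(X)$. The desired bound $Q_j'(X)\ll X^{j-2}/(j-2)!$ is then immediate from the bound just proved, applied with $j$ replaced by $j-1$ (which stays in the admissible range, as $1\leqslant j-1\leqslant j\leqslant eqX$ for $j\geqslant 2$).

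I do not expect a genuine obstacle here once Proposition \ref{lb} is in place: the argument is nothing more than Cauchy's estimates plus a convergent geometric series, and the identity $Q_j'=Q_{j-1}$ makes the derivative bound essentially free. The one point that requires care — and the place where the standing assumption $q>2$ is really used — is the bookkeeping of the numerical constants, namely that $e<2.9<q$ can hold simultaneously; this is also exactly why the case $q=2$ has to be excluded in Theorem \ref{l10}.
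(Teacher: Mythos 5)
Your proof is correct and follows essentially the same route as the paper: Cauchy's estimate on the circle $|z|=2.9q$ (valid since $2.9q<q^2$ for $q\geqslant3$) for the Taylor coefficients of $h$, followed by pulling out the top term of $Q_j$ and summing the resulting geometric series $\sum_m (e/2.9)^m$. The one small improvement over the paper's proof is your observation of the clean identity $Q_j'(X)=Q_{j-1}(X)$, which makes the derivative bound an immediate corollary rather than requiring the argument to be repeated.
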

We can now set up the stage for proving Theorem $\ref{l10}$. As mentioned earlier we need to have a precise estimate for the twisted binomial sum appearing in $T_1$. Therefore the following Proposition plays a key role in establishing Theorem $\ref{l10}$.
\begin{prop}\label{l14}
Let $Y=\log(n-k)$. We have the following estimate
$$\displaystyle\sum\limits_{j\leqslant eqY}\displaystyle\binom{q-1+k-j}{q-1}q^j Q_j(Y)=\dfrac{k^{q-1}qe^{qY}}{(q-1)!}h(q)\Big\{1+o(1)\Big \}.$$
\end{prop}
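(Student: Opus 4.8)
\textbf{Proof proposal for Proposition \ref{l14}.}

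The plan is to treat the sum as a discrete convolution and show that it is dominated by a narrow window of indices $j$, over which a saddle-point / Laplace-type analysis applies. First I would use the bound $Q_j(Y)\ll Y^{j-1}/(j-1)!$ from Lemma \ref{l12} together with $\binom{q-1+k-j}{q-1}\ll (k-j)^{q-1}/(q-1)!$ (valid since $j\leqslant eqY = o(k)$, as $k\geqslant \xi(n)\log n$ and $\xi(n)\to\infty$) to see that the $j$-th summand is of size roughly $(k-j)^{q-1}(qY)^j/((q-1)!(j-1)!)$. The ratio of consecutive terms is about $qY/j$ (the slowly varying binomial factor being essentially constant since $k-j\sim k$ throughout the range), so the terms increase up to $j\approx qY$ and then decrease; crucially the cutoff $j\leqslant eqY$ sits safely to the right of the peak, so truncating there loses only a negligible tail. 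This means I can replace $\binom{q-1+k-j}{q-1}$ by its value at the peak, namely $\sim k^{q-1}/(q-1)!$, incurring a relative error $O(j/k)=o(1)$ uniformly over the bulk.

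Next I would unfold $Q_j(Y)=\sum_{m+\ell=j-1} \frac{h^{(m)}(0)}{m!}\frac{Y^\ell}{\ell!}$ and interchange the order of summation, summing over $m\geqslant 0$ on the outside and over $\ell$ (equivalently over $j = m+\ell+1$) on the inside. After pulling out the $k^{q-1}/(q-1)!$ factor, the inner sum becomes $\sum_{j} q^j \frac{Y^{j-1-m}}{(j-1-m)!}$, which — once I extend the range of $\ell = j-1-m$ from a finite window to all of $\mathbb{N}_0$ — is exactly $q^{m+1} e^{qY}$. The point is that extending the $\ell$-summation to infinity is legitimate: the omitted tail (coming from $j > eqY$) is exponentially small compared to $e^{qY}$ by the same ratio-test / Stirling estimate used for $T_2$ in Proposition \ref{l9}, and this is precisely why the constant $e$ appears in the cutoff $eqY$. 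Collecting terms, the sum becomes
\[
\frac{k^{q-1}}{(q-1)!}\, q\, e^{qY} \sum_{m\geqslant 0} \frac{h^{(m)}(0)}{m!} q^{m}\bigl(1+o(1)\bigr) = \frac{k^{q-1} q e^{qY}}{(q-1)!}\, h(q)\bigl(1+o(1)\bigr),
\]
using that $\sum_{m\geqslant 0} \frac{h^{(m)}(0)}{m!} q^m$ is the Taylor series of $h$ evaluated at $q$, which converges since $h$ is analytic on $|z|<q^2$ (Proposition \ref{lb}) and $q < q^2$.

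The main obstacle — and the reason the argument is delicate — is making the two approximations uniform and simultaneously controlled: replacing the binomial coefficient by $k^{q-1}/(q-1)!$ introduces an error of relative size $O(j/k)$ per term, and this must be shown to contribute $o(1)$ after summation against the sharply peaked weights $q^j Q_j(Y)$, which requires knowing that the bulk of the mass sits at $j\asymp qY = o(k)$; and extending the $\ell$-sum to infinity requires the exponential tail bound, which in turn relies on the derivative bound $h^{(m)}(0)/m! \ll (2.9q)^{-m}$ from Lemma \ref{l12} to guarantee absolute convergence of the rearranged double sum. I expect the bookkeeping of these two error terms — carefully splitting the $j$-sum at, say, $|j-qY|\leqslant Y^{2/3}$ and estimating inside and outside separately — to be where essentially all the work lies. (This is presumably also where the restriction $q\neq 2$ enters, since at $q=2$ the cutoff $eqY$ and the peak location $qY$ interact with the range constraint $k\leqslant n/B$ less comfortably; cf. Remark \ref{Y}.)
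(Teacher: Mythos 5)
Your approach is essentially the paper's: replace $\binom{q-1+k-j}{q-1}$ by $k^{q-1}/(q-1)!\,(1+o(1))$ using $j=o(k)$, unfold $Q_j(Y)$ and swap the order of summation, complete the $\ell$-sum to $e^{qY}$ and recognize the Taylor series of $h$ at $q$, then control the omitted tail via the bound $h^{(m)}(0)/m!\ll(2.9q)^{-m}$ from Lemma \ref{l12} together with Stirling. Two small remarks. First, no Laplace-type localization (nor a split at $|j-qY|\leqslant Y^{2/3}$) is needed for the binomial replacement: since $j\leqslant eq\log(n-k)$ and $k\geqslant\xi(n)\log n$ with $\xi(n)\to\infty$, one has $j/k\to 0$ uniformly over the \emph{entire} range, so the relative error is $o(1)$ term by term with no need to know where the mass concentrates; the paper then bounds the completed tail $R=\sum_{j>\lfloor eqY\rfloor}q^jQ_j(Y)$ directly by splitting according to whether $\ell\leqslant\lfloor eqY\rfloor$ or $\ell>\lfloor eqY\rfloor$. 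Second, your closing parenthetical about $q\neq 2$ is off: as Remark \ref{Y} states, the constant $e$ in the cutoff $eqY$ is forced by the Stirling estimate in the tail, and this in turn forces the contour radius $r=j/\log n\leqslant eq$ in Lemma \ref{l11} to satisfy $eq<q^2$, i.e.\ $q>e$ — the restriction does not come from any interaction with $k\leqslant n/B$.
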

\begin{proof}
We start with an observation first. By the hypothesis of Theorem $\ref{l10}$ we have $k>\xi(n)\log n$ yielding $k-j\geqslant \xi(n)\log n-eq\log (n-k)\geqslant (\xi(n)-eq)\log n\to\infty$. Therefore using the standard fact $\binom{\alpha}{\beta}=(1+o(1))\frac{\alpha^{\beta}}{\beta!}$ where $\beta$ is assumed to be fixed and $\alpha\to\infty$ we obtain 
\begin{align}
\nonumber
\displaystyle\sum\limits_{j\leqslant eqY}\displaystyle\binom{q-1+k-j}{q-1}q^j Q_j(Y) &= \dfrac{1}{(q-1)!}\displaystyle\sum\limits_{j\leqslant eqY}\Big \{q^j(q-1+k-j)^{q-1}Q_j(Y)\Big\}(1+o(1))\\
\nonumber
&=\dfrac{k^{q-1}}{(q-1)!}\displaystyle\sum\limits_{j\leqslant eqY}\Big \{q^jQ_j(Y)\Big\}(1+o(1)),
\end{align}
where we used the the Taylor expansion of $\left(k+q-j-1\right)^{q-1}=k^{q-1}(1+o(1))$ since $j=o(k).$\\\\
We now evaluate the sum
\begin{align}
\nonumber
\displaystyle\sum\limits_{j\leqslant eqY}q^jQ_j(Y) &=\left(\displaystyle\sum\limits_{j\geqslant 1} q^j \sum\limits_{\substack{m+\ell=j-1 \\ m,\ell\geqslant 0}}\dfrac{h^{m}(0)}{m!\ell!}Y^\ell\right)- R\\
\nonumber
 &=\left(q\displaystyle\sum\limits_{\ell\geqslant 0} \dfrac{(qY)^\ell}{\ell!} \sum\limits_{j-m=\ell+1}\dfrac{h^{m}(0)}{m!}q^{j-\ell-1}\right)-R\qquad (\text{after interchanging the sum})\\
 \nonumber
 &=\left(q\displaystyle\sum\limits_{\ell\geqslant 0}\dfrac{(qY)^\ell}{\ell!} \sum\limits_{m\geqslant 0}\dfrac{h^{m}(0)}{m!}q^{m}\right)-R\qquad (\text{since }m=j-\ell-1\text{ and } j\geqslant \ell+1)\\
\nonumber
&=qe^{qY}h(q)-R,\qquad (\text{$h(z)$ is analytic for $|z|<q^2$})
\end{align}
where
\begin{align}
\nonumber
R &= \sum\limits_{j>\lfloor eqY\rfloor}q^j Q_j(Y)\quad (\text{where }\lfloor x\rfloor \text{ denotes the greatest integer }\leqslant x)\\
\nonumber
&= \displaystyle\sum\limits_{j\geqslant\lfloor eqY\rfloor+1} q^j \sum\limits_{\substack{m+\ell=j-1 \\ m,\ell\geqslant 0}}\dfrac{h^{m}(0)}{m!\ell!}Y^\ell\\
\nonumber
&= q\left(\displaystyle\sum\limits_{0\leqslant \ell\leqslant\lfloor eqY\rfloor} \dfrac{(qY)^\ell}{\ell!} \sum\limits_{m\geqslant\lfloor eqY\rfloor-\ell}\dfrac{h^{m}(0)}{m!}q^m\right)+q\left(\displaystyle\sum\limits_{\ell\geqslant\lfloor eqY\rfloor+1} \dfrac{(qY)^\ell}{\ell!} \sum\limits_{m\geqslant 0}\dfrac{h^{m}(0)}{m!}q^m\right)\\
\nonumber
&\ll_q S_1+S_2. \quad(\text{say})
\end{align}
We have the following estimates,
\begin{align}
\nonumber
S_1 &\leqslant \displaystyle\sum\limits_{0\leqslant \ell\leqslant\lfloor eqY\rfloor} \dfrac{(qY)^\ell}{\ell!} \sum\limits_{m\geqslant\lfloor eqY\rfloor-\ell}\left(\dfrac{1}{2}\right)^m \quad \left(\text{using }h^{(m)}(0)/m!<(1/2q)^m\text{ from Lemma \ref{l12}}\right)\\
\nonumber
&=\displaystyle\sum\limits_{0\leqslant \ell\leqslant\lfloor eqY\rfloor} \dfrac{(qY)^\ell}{\ell!}\left(\dfrac{1}{2}\right)^{\lfloor eqY\rfloor-\ell}\\
\nonumber
&=  \left(\dfrac{1}{2}\right)^{\lfloor eqY \rfloor}\displaystyle\sum\limits_{0\leqslant \ell\leqslant\lfloor eqY\rfloor} \dfrac{(2qY)^\ell}{\ell!}\\
\nonumber
&\leqslant e^{0.12qY}.\quad \left(\text{since }\left(\dfrac{1}{2}\right)^{\lfloor eqY \rfloor}=e^{-1.88qY}\text{ and the rest is bounded by the complete sum }e^{2qY}\right)
\end{align}
Similarly
\begin{align}
\nonumber
S_2 &=h(q) \displaystyle\sum\limits_{\ell\geqslant\lfloor eqY\rfloor+1} \dfrac{(qY)^\ell}{\ell!}\\
\nonumber
&\ll (qY)^{\lfloor eqY\rfloor}\displaystyle\sum\limits_{\ell\geqslant 1} \dfrac{(qY)^{\ell}}{\left(\ell+\lfloor eqY \rfloor\right)!}\quad\left(\text{dropping $h(q)$and using a change of variable }\ell\mapsto \ell-\lfloor eqY \rfloor\right)\\
\nonumber
&\leqslant \dfrac{(qY)^{\lfloor eqY\rfloor}}{\lfloor eqY \rfloor!}\displaystyle\sum\limits_{\ell\geqslant 1} \dfrac{(qY)^\ell}{\ell!} \left(\text{since } \displaystyle\binom{\ell+\lfloor eqY \rfloor}{\ell}\geqslant 1\text{ so that } \dfrac{1}{(\ell+\lfloor eqY \rfloor)!}\leqslant \dfrac{1}{\ell!\lfloor eqY \rfloor!}\right)\\
\nonumber
&\ll \dfrac{1}{\sqrt{\lfloor eqY \rfloor}}e^{qY},
\end{align}
where at the last step we used Stirling. Indeed letting $p=eqY $, we have
$$\dfrac{(qY)^{\lfloor eqY\rfloor}}{\lfloor eqY \rfloor!}\ll \dfrac{\left( \dfrac{p}{e}\right)^{\lfloor p\rfloor}}{\left( \dfrac{\lfloor p \rfloor}{e}\right)^{\lfloor p \rfloor}\sqrt{\lfloor p \rfloor}}=\left( \dfrac{p}{\lfloor p \rfloor}\right)^{\lfloor p \rfloor}\dfrac{1}{\sqrt{\lfloor p \rfloor}}\leqslant \left(1+\dfrac{1}{\lfloor p \rfloor} \right)^{\lfloor p \rfloor}\dfrac{1}{\sqrt{\lfloor p \rfloor}}\leqslant \dfrac{e}{\sqrt{\lfloor p \rfloor}}.$$
\end{proof}
\begin{remark}\label{Y}
In the hypothesis of the above proposition, the factor $e$ in the parameter $eqY$ was chosen carefully. The final application of Stirling will not work if we choose a factor strictly smaller than $e$. This choice induces a restriction in the hypothesis of the Lemma \ref{l11}. The proof of the lemma in appendix \ref{X} crucially uses the technical condition $q>e$. This is why our methods do not cover the case $q=2$.
\end{remark}
\begin{proof}[Proof of Theorem \ref{l10}]
\label{Proof of Theorem 3.1}
We have that
\begin{align}
\nonumber
N(n,k) &= \displaystyle\sum_{1\leqslant j\leqslant eq Y} \binom{k-j+q-1}{q-1}N'(n+j-k,j)+\displaystyle\sum_{j> eq Y} \binom{k-j+q-1}{q-1}N'(n+j-k,j)\\
\nonumber
&=T_1+T_2.
\end{align}
Proposition \ref{l9} shows the tail $T_2$ is small and is $\ll q^{n-k}(n-k)^{q-2}.$\\
Lemma \ref{l13} and Proposition \ref{l14} implies,
\begin{align}
\nonumber
T_1 &=\displaystyle\sum\limits_{j\leqslant eqY}\binom{k-j+q-1}{q-1}\dfrac{q^{n+j-k}}{n-k}Q_j(Y)+O\left(\displaystyle\sum\limits_{j\leqslant eqY}\binom{k-j+q-1}{q-1}\dfrac{(\log (n-k))^{j+1}}{j!(n-k)} \right)\\
\nonumber
&=\dfrac{q^{n-k}}{n-k}\displaystyle\sum\limits_{j\leqslant eqY}\binom{k-j+q-1}{q-1}q^jQ_j(Y)+O\left(\displaystyle\sum\limits_{j\leqslant eqY}\binom{k-j+q-1}{q-1}\dfrac{(\log (n-k))^{j+1}}{j!(n-k)} \right)\\
\nonumber
&= \dfrac{q^{n-k+1}k^{q-1}e^{qY}}{(q-1)!(n-k)}h(q)\Big\{1+o(1)\Big \}+O\left((1+o(1))\frac{k^{q-1}}{(q-1)!}\displaystyle\sum\limits_{j=1}^{\infty}\dfrac{(\log (n-k))^{j+1}}{j!(n-k)} \right)\\
\nonumber
&= \dfrac{q^{n-k+1}k^{q-1}e^{qY}}{(q-1)!(n-k)}h(q)\Big\{1+o(1)\Big \}+O\left(k^{q-1}\dfrac{\log(n-k)e^{\log(n-k)}}{n-k} \right)\\
\nonumber
&= \dfrac{q^{n-k+1}k^{q-1}e^{qY}}{(q-1)!(n-k)}h(q)\Big\{1+o(1)\Big \}+O\left(k^{q-1}\log(n-k)\right).
\end{align}
Thus we obtain
$$T_1\thicksim \dfrac{q^{n-k}k^{q-1}e^{qY}}{(n-k)}\dfrac{qh(q)}{(q-1)!}\thicksim C(q)\dfrac{q^nk^{q-1}(n-k)^{q-1}}{q^k},$$
where we used $e^{qY}/(n-k)=(n-k)^{q-1}$ when $Y=\log(n-k)$ and
\[C(q)=\dfrac{qh(q)}{(q-1)!}=\dfrac{1}{\left((q-1)!\right)^2}\left( 1-\dfrac{1}{q}\right)^{q^2}\displaystyle\prod\limits_{p\in\mathcal{P}_{\geqslant 2}}\left(1-\frac{1}{q^{deg(p)-1}}\right)^{-1}\left( 1-\frac{1}{q^{deg(p)}}\right)^q,\] a constant that only depends on $q$.
\end{proof}
\appendix
\section{Auxiliary results towards theorem $\ref{l6}$}\label{le}
We establish the technical Lemmas $\ref{l15}$, $\ref{l16}$ and $\ref{l17}$, used in the proofs of Theorem $\ref{l6}$ and will again be used in showing Proposition $\ref{lb}$.
\lll*
\begin{proof}[Proof of Lemma \ref{l15}]
\label{Proof of Lemma 3.1}
 We define the function $f(t):=1-\frac{t^2}{5}-\cos t$ for $t\in[0,\pi]$ and observe that $f'(t)=0\implies \sin t=\frac{2t}{5}$. A quick graph plot or plugging this equation into a computer reveals that it has only two solutions in the interval $[0,\pi]$ , namely $t=0$ and $t\approx 2.125$. We verify that $f'' (0)=-\frac{2}{5}+\cos 0=\frac{3}{5}>0$ and hence $t=0$ is the global minima in the interval $[0,\pi]$ since $0=f(0)< f(2.125)$, which can also be checked on a computer. Finally since $\cos t$ and $t^2$ are both even functions, we have for all $t\in[-\pi,\pi]$ the following inequality
$$f(t)\geqslant 0 \implies 1-\dfrac{t^2}{5}-\cos t\geqslant 0 \implies \cos t -1\leqslant -\dfrac{t^2}{5}.$$For $0\leqslant x,t\leqslant \pi$ it is well known that, 
$$\sin x\leqslant x\implies \displaystyle\int_{0}^{t} \sin x dx \leqslant \displaystyle\int_{0}^{t}x dx  \implies 1-\cos t\leqslant \dfrac{t^2}{2}.$$
Again the above inequality is true for $-\pi\leqslant t\leqslant \pi$ since $\cos$ and $t^2$ are both even functions. Finally we observe $\left|1-e^{it}\right|^2=(1-\cos t)^2+\sin^2t=2(1-\cos t)\leqslant t^2$ and the claim follows.
\end{proof}
\p*
\begin{proof}[Proof of Lemma \ref{l16}]
Let $I_0=\displaystyle\int_{|z|=r}\left|n^{z-r} \right| |dz|$. After substituting $z=re^{2\pi i t}$ where $t\in \left[-\dfrac{1}{2},\dfrac{1}{2} \right)$ we get
\begin{align}
\nonumber
I_0 &\ll \displaystyle\int_{-\frac{1}{2}}^{\frac{1}{2}}\left|n^{re^{2\pi i t}-r} \right| rdt\\
\nonumber
&= r\displaystyle\int_{-\frac{1}{2}}^{\frac{1}{2}}\left|n^{r(\cos 2\pi t -1)} \right|dt \qquad \left(\left| n^{i r \sin 2\pi t}\right|=1\right)\\
\nonumber
&\ll r\displaystyle\int_{-\frac{1}{2}}^{\frac{1}{2}}n^{-\frac{4\pi^2rt^2}{5}} dt \qquad (\text{ by Lemma } \ref{l15})\\
\nonumber
&\ll r\displaystyle\int_{-\infty}^{\infty}e^{-t^2} \dfrac{dt}{\sqrt{r\log n}}\qquad \left( t\mapsto 2\pi t\sqrt{\dfrac{r\log n}{{5}}} \right)\\
\nonumber
&\ll r\dfrac{1}{\sqrt{r\log n}}=\dfrac{r}{\sqrt{j}}.\qquad (j=r\log n)
\end{align}
\end{proof}
\q*
\begin{proof}[Proof of Lemma \ref{l17}]
\label{Proof of Lemma 3.3}
\begin{align}
\nonumber
\displaystyle\int\limits_{-\infty}^{\infty} \theta^2\exp\left( - \dfrac{k\theta^2}{5} \right)d\theta  &= \dfrac{5\sqrt{5}}{k^{\frac{3}{2}}}\displaystyle\int_{0}^{\infty}\sqrt{z}e^{-z}dz \qquad \left(z=\dfrac{k\theta^2}{5}\right)\\
\nonumber
&\ll k^{-\frac{3}{2}}. \qquad \left( \text{ since }\displaystyle\int_{0}^{\infty}\sqrt{z}e^{-z}dz \text{ is convergent} \right)
\end{align}
\end{proof}
\section{towards Proposition \ref{lb} and auxiliary results for theorem \ref{l10}}\label{lf}
\subsection{A Selberg-Delange style argument for $\widetilde{M_z}(n)$}
\llll*
\begin{proof}[Proof of Proposition \ref{lb}]\label{lh}
\label{Proof of Proposition 1.4}
We recall some definitions for convenience,
$$\widetilde{\mathcal{M}}:=\bigg\{ \textit{set of all monics in } \mathbb{F}_q[t] \textit{ having no root in }\mathbb{F}_q\bigg \},$$
$$\widetilde{\mathcal{M}_n}:=\bigg\{ \textit{set of all of degree }n\textit{ monics in } \mathbb{F}_q[t] \textit{ having no root in }\mathbb{F}_q\bigg \},$$
$$N'(n,k):= \#\bigg \{f\in \widetilde{\mathcal{M}_n}:\Omega(f)=k \bigg \}.$$
We proceed as the proof of Theorem \ref{l6} by first defining the following quantities,
$$\widetilde{M}_z(n)=\displaystyle\sum\limits_{f\in\widetilde{\mathcal{M}_n}}z^{\Omega(f)}=\displaystyle\sum\limits_{k\geqslant 0} z^kN'(n,k) \text{    and   } \widetilde{G}_z(u)=\displaystyle\sum\limits_{f\in\widetilde{\mathcal{M}}}z^{\Omega(f)}u^{deg(f)}.$$
Our Euler product decomposition for $\widetilde{G}_z(u)$ now becomes
$$\widetilde{G_z}(u)=\displaystyle\prod_{p\in\mathcal{P}_{\geqslant 2}}\left(1-zu^{deg(p)}\right)^{-1}.$$
Following the same spirit we now have
$$\widetilde{G}_z(u)=\zeta(u)^z \widetilde{F}_z(u),$$
where $\widetilde{F}_z$ is given by the Euler product below
$$\widetilde{F}_z(u)=(1-u)^{qz}\displaystyle\prod\limits_{p\in\mathcal{P}_{\geqslant 2}}\left(1-zu^{deg(p)} \right)^{-1}\left(1-u^{deg(p)}\right)^z.$$
We first show the above Euler product converges absolutely in the region defined by,
$$\widetilde{\mathcal{R}}:=\Big \{(u,z)\in\mathbb{C}^2:|z|< q^2,|u|^2<|z|^{-1},|u|<q^{-1/2}\Big\}\subset\mathbb{C}^2.$$
We take the standard branch of complex logarithm defined over $\mathbb{C}\setminus(-\infty,0]$ and get for each integer $n\geqslant 2$, (since  $|u|^n<1 ,|zu^n|\leqslant |zu^2|<1)$
\begin{align}
\nonumber
z\log(1-u^n)-\log(1-zu^n)  &= -zu^n+O\left(\left|zu^{2n}\right|\right)+zu^n+O\left(\left|z^2u^{2n}\right|\right)\\
\nonumber
&= O\left(\left|q^4u^{2n}\right|\right)\qquad \left(|z|<q^2\right).\\
\end{align}
and hence using $\exp(O(x))=1+O(x)$ for $x=O(1)$ we get,
$$(1-u^n)^z(1-zu^n)^{-1}=1+O\left(\left|q^4u^{2n}\right|\right).$$
We now observe the Euler product for $\widetilde{F}_z$ converges uniformly on the compact subsets of $\widetilde{\mathcal{R}}$ as
$$\displaystyle\sum\limits_{p\in\mathcal{P}_{\geqslant 2}}|u|^{2n}\leqslant \displaystyle\sum\limits_{n\geqslant 2}|qu^2|^{n}<\infty.\qquad (\text {where we used}, \# \mathcal{P}_n\leqslant q^n)$$
We also verify $|1-zu^{deg(p)}|\geqslant 1-|zu^{deg(p)}|\geqslant 1-|zu^2|>0$, which shows none of the $\left(1-zu^{deg(p)} \right)^{-1}$ factors contribute to a pole in $\widetilde{\mathcal{R}}$.\\
Also $(1-u)^{qz}$ is holomorphic in $\widetilde{\mathcal{R}}$ and this proves $\widetilde{F_z}(u)$ is holomorphic inside $\widetilde{\mathcal{R}}$.
We determine the coefficient of $u^n$ in $\widetilde{{G}_z}(u)$ using Cauchy's integral formula and recover $\widetilde{{M}_z}(n)$ exactly as in the proof of Theorem \ref{l6}. We have that, if $|u|=r < q^{-1}$ (refer to the small green circle in the same Fig-\ref{Ax} below), then using Cauchy's residue formula on (\ref{l40}) gives
\begin{equation}\label{lg}
\widetilde{M_z}(n)=\dfrac{1}{2\pi i}\displaystyle\int\limits_{|u|=r}\widetilde{G_z}(u) \dfrac{du}{u^{n+1}}=\dfrac{1}{2\pi i}\displaystyle\int\limits_{|u|=r}\zeta(u)^z\widetilde{F_z}(u) \dfrac{du}{u^{n+1}}.
\end{equation}
\begin{figure}[h]
    \centering
    
    \caption{The contour appearing in the proof of Proposition \ref{lb}}
    \label{Ax}
\begin{tikzpicture}[scale=0.8]
  \tkzInit[xmin=-6,ymin=-6,xmax=6,ymax=6]
  \tkzDrawXY[noticks]
  \tkzDefPoint(0,0){L}
   \tkzDefPoint(3,0){O}
   \tkzDefPoint(3.5,.2){B} \tkzDefPoint(3.5,-.2){C}
   \tkzDefPoint(5.8,.2){D} \tkzDefPoint(5.8,-.2){E}  
  \tkzDrawArc[color=red,line width=1pt](O,B)(C) 
  \begin{scope}[decoration={markings,
      mark=at position .5 with {\arrow[scale=2]{>}};}]
    \tkzDrawSegments[postaction={decorate},color=red,line width=1pt](B,D E,C)
  \end{scope}
  \begin{scope}[decoration={markings,
     mark=at position .20 with {\arrow[scale=2]{>}},
     mark=at position .70 with {\arrow[scale=2]{>}};}]
    \tkzDrawArc[postaction={decorate},color=black,line width=1pt](L,D)(E)
  \end{scope}
  \draw[green,thick] (0,0) circle (1.5cm);
  \draw[style=dashed] (0,0) circle (3);
  \draw (4.4,1) node[below left] {$\textcolor{red}{\widetilde{\mathcal{H}'}}$};
  \draw (5.4,6.5) node[below left] {$\textcolor{black}{|u|=1/q+\widetilde{\eta}}$};
  \draw (2.6,4) node[below left] {$\textcolor{black}{|u|=1/q}$};
  \draw (2.1,2.3) node[below left] {$\textcolor{green}{|u|=r}$};
\end{tikzpicture}
\end{figure}
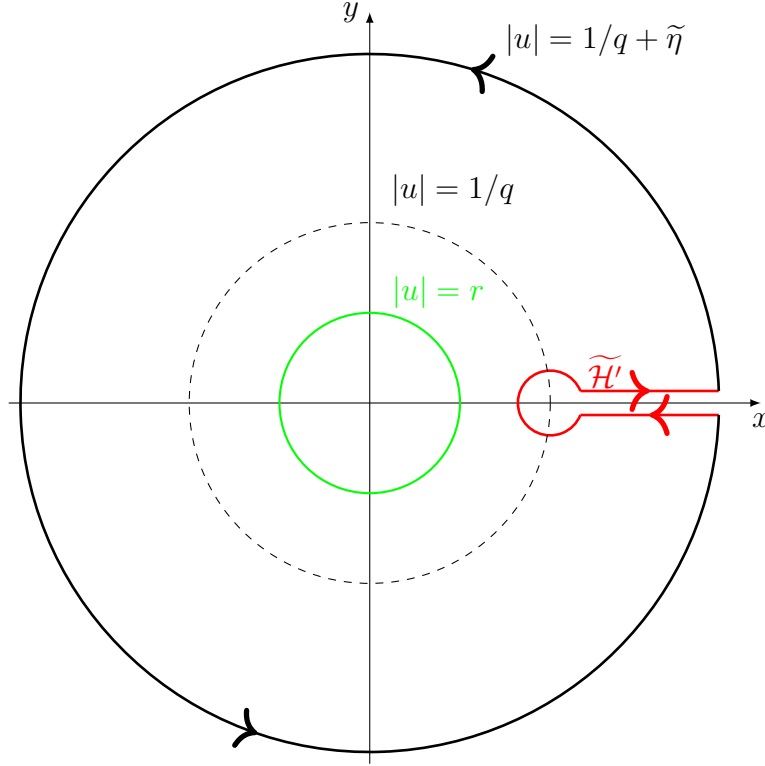
Now we again want to collect the contribution coming from the singularity of $\zeta(u)^z$ at $u=1/q$ to evaluate the above integral. We shift the contour $|u|=r$ to a bigger circle $|u|=1/q+\widetilde{\eta}$ with a different choice of $\widetilde{\eta}$ here (not the same choice for $\eta$ in Proposition \ref{la}) and with $|z|\leqslant q^2-\delta$. Since the region of absolute convergence for $\widetilde{F_z}(u)$ here is $\widetilde{\mathcal{R}}$, we have a different set of constraints on $\widetilde{\eta}$ than before. Here we get for $\widetilde{\eta}$, $|zu^2|\leqslant (q^{-1}+\widetilde{\eta})^2(q^2-\delta)\leqslant 1$ and $|u|=1/q+\widetilde{\eta}<q^{-\frac{1}{2}}$. This shows taking any $0<\widetilde{\eta} <\min\{1/\sqrt{q^2-\delta}-1/q,1/\sqrt{q}-1/q \}$ suffices. We then plan to use Lemma \ref{l3} to evaluate the integral in (\ref{lg}) with, $\widetilde{\eta}=\min\{1/\sqrt{q^2-\delta}-1/q,1/\sqrt{q}-1/q \}/2$ for $|z|\leqslant q^2-\delta$ just as we did in the proof of Proposition \ref{la}. \\
\begin{remark}
$F_z(u)$ was absolutely convergent in $\mathcal{R}$ which gave us the appropriate choice for $\eta$. All the calculations are identical here with $\widetilde{F_z}$ instead of $F_z$. The main difference is, here we have $|z|\leqslant q^2-\delta$ instead of $|z|\leqslant q-\epsilon$. Earlier ${F}_z\left(\frac{1}{q}\right)$ had a pole at $z=q,q^2,\ldots$ but we got rid of the $z=q$ pole here. This is what essentially allows us to take $|z|\leqslant q^2-\delta.$
\end{remark}
We therefore get
\begin{align}
\nonumber
\widetilde{M}_z(n) &= \dfrac{1}{2\pi i}\displaystyle\int\limits_{\{|u|=\frac{1}{q}+\widetilde{\eta\}}\cup\widetilde{\mathcal{H}'}} \zeta(u)^z\widetilde{F}_z(u)\dfrac{du}{u^{n+1}}\\
\nonumber
&=\widetilde{F}_z\left(\dfrac{1}{q}\right)q^n n^{z-1} \left( \dfrac{1}{\Gamma(z)}+O_{\delta}\left(\dfrac{1}{n}\right)\right)\\
\nonumber
&=q^nn^{z-1}\left(zh(z)+ O_{\delta}\left(\dfrac{1}{n}\right) \right),
\end{align}
where 
$$h(z)=\dfrac{1}{z}\widetilde{F}_z\left(\dfrac{1}{q}\right)\dfrac{1}{\Gamma(z)}=\dfrac{1}{\Gamma(z+1)}\left( 1-\dfrac{1}{q}\right)^{qz}\displaystyle\prod\limits_{p\in\mathcal{P}_{\geqslant 2}}\left(1-\frac{z}{q^{deg(p)}}\right)^{-1}\left( 1-\frac{1}{q^{deg(p)}}\right)^z.$$
Also we have $h(z)=\frac{1}{\Gamma(z+1)}\widetilde{F}_z\left(\frac{1}{q}\right)$. Since, $\widetilde{F}_z\left(\frac{1}{q}\right)$ is analytic in $|z|<q^2$, so is $h$.
\end{proof}
\subsection{An useful upper bound for $N'(n,j)$.}
\r*
\begin{proof}[Proof of Lemma \ref{l8}]\label{li}
We use Proposition \ref{lb} with $z=q+\eta$ and if $a_j(f)$ denotes the indicator function of polynomials in $\widetilde{M_n}$ with $\Omega(f)=j$ then, we see that every time $a_j=1$ occurs, it contributes an amount of $\left(q+\eta\right)^j$ on the left hand side of the sum of Proposition \ref{lb} and all the other terms are positive. We have by the previous lemma $h(z)$ is analytic in an open set containing the disc $|z|\leqslant q^2-\delta$. Since the disk is compact and $h$ is continuous there, we have $h(z)\ll_{q,\delta} 1$ for every $|z|\leqslant q^2-\delta$. Consequently for every $|z|\leqslant q^2-\delta$, $|zh(z)|\leqslant |(q+\eta)h(q+\eta)|\ll_{q,\delta} 1$. This observation leads to an upper bound $N'(n,j) \ll_{q,\delta} q^n n^{q+\eta-1}\left(\frac{1}{q+\eta}\right)^j$.
\end{proof}
\subsection{An uniform estimate for $N'(n,j)$ when $j\leqslant eq\log n$}\label{lj}
\s*
\begin{proof}[Proof of Lemma \ref{l11}]\label{X}
We use Proposition \ref{lb}. The coefficient of $z^j$ in $\widetilde{M}_z(n)$ is what we want to evaluate. We have for $r<q^2$,
$$N'(n,j)=\dfrac{1}{2\pi i}\displaystyle\int\limits_{|z|=r} \widetilde{M}_z(n) \dfrac{dz}{z^{j+1}}.$$
We recall from Proposition \ref{lb} $\widetilde{M}_z(n)=\dfrac{q^n}{n}\bigg \{ zh(z)n^z+O\left(n^{z-1}\right)\bigg \}.$
Plugging this back in the integral we get that
$$\dfrac{n}{q^n}N'(n,j)=\dfrac{1}{2\pi i}\displaystyle\int_{|z|=r}zh(z)n^z \dfrac{dz}{z^{j+1}}+O(E_0),$$
where $E_0$ is given by, $\displaystyle\int_{|z|=r} \left|\dfrac{n^{z-1}}{z^{j+1}}\right||dz|.$
At this point we choose $r=\frac{j}{\log n}\leqslant eq < q^2$ since by hypothesis $j\leqslant eq\log n$ and $q\geqslant 3$.\\
Noting $e^j=n^r\iff j=r\log n$ and using Lemma \ref{l16}, we get the term $E_0$ is bounded by
$$\ll \dfrac{r^{-j-1}e^j}{n}\displaystyle\int_{|z|=r} |n^{z-r}||dz|\ll \dfrac{r^{-j-1}e^j}{n}\dfrac{r}{j^{\frac{1}{2}}}\ll \dfrac{1}{n}\dfrac{(e\log n)^j}{j^{j+\frac{1}{2}}}\ll \dfrac{1}{n} \dfrac{(\log n)^j}{j!},$$
where the final step of bounding follows by Stirling.
This is the same trick that we used while bounding above $E$ on page 10 in the proof of Theorem \ref{l6}. 
We now return to the main term
$$M_0=\dfrac{1}{2\pi i}\displaystyle\int_{|z|=r}zh(z)n^z \dfrac{dz}{z^{j+1}}=\dfrac{1}{2\pi i}\displaystyle\int_{|z|=r}h(z)n^z \dfrac{dz}{z^j},$$
where $h(z)$ is an analytic function in the region $|z|<q^2$, so the integrand has a pole at $z=0$ of order $j$. We evaluate this integral using residue theorem and obtain
$$M_0=\lim\limits_{z\to 0}\dfrac{1}{(j-1)!}\dfrac{d^{j-1}}{dz^{j-1}} h(z)n^z.$$
Using binomial theorem for higher order differentiation of products of two complex functions
\begin{align}
\nonumber
\dfrac{1}{(j-1)!}\dfrac{d^{j-1}}{dz^{j-1}} h(z)n^z\Bigg|_{z=0} &=\dfrac{1}{(j-1)!}\displaystyle\sum\limits_{m+\ell=j-1}\dfrac{(j-1)!}{m!\ell!} h^{(m)}(z) (n^z)^{(\ell)}\Bigg|_{z=0}\\
\nonumber
&=\displaystyle\sum\limits_{m+\ell=j-1}\dfrac{1}{m!\ell!}h^{(m)}(0)(\log n)^\ell.
\end{align}
where we have taken the limits inside the argument of the higher order derivatives which is allowed because all the functions here are analytic and hence infinitely differentiable. Plugging everything back we obtain the desired result.
\end{proof}
\subsection{A technical upper bound for $h(z)$, $Q_j(X)$ and their derivatives.}
\u*
\begin{proof}[Proof of Lemma \ref{l12}]\label{lk}
From Proposition \ref{lb} we have that $h(z)$ is holomorphic for $|z|<q^2$ which in particular implies $h(z)\ll_{q} 1$ for $|z|\leqslant 2.9q$. Therefore we have by Cauchy's integral formula,
$$h^{(m)}(0)/m!=\dfrac{1}{2\pi}\left|\displaystyle\int\limits_{|z|=2.9q} h(z)\dfrac{dz}{z^{m+1}}\right|\ll \left( \dfrac{1}{2.9q}\right)^m \quad \left(2.9q<q^2\text{ whenever }q\geqslant 3\right).$$
We have from Lemma \ref{l13}
$$Q_j(X)=\displaystyle\sum\limits_{m+\ell=j-1}\dfrac{1}{m!\ell!}h^{(m)}(0) X^{\ell}\ll \displaystyle\sum\limits_{m+\ell=j-1} \left( \dfrac{1}{2.9q}\right)^m \dfrac{X^{\ell}}{\ell!}$$
We now take out the top term $\frac{X^{j-1}}{(j-1)!}$ to bound the sum from above and observe for each $m\geqslant 1$, $\frac{X^\ell}{\ell!}=\frac{X^{j-m-1}}{(j-m-1)!}\leqslant (eq)^m\frac{X^{j-1}}{(j-1)!}$ which is true after cross multiplying whenever $j\leqslant eqX$ since $ (j-1)(j-2)\cdots (j-m) \leqslant j^m \leqslant (eqX)^m$. Hence we arrive at the following upper bound,
$$Q_j(X)\ll \dfrac{X^{j-1}}{(j-1)!}\displaystyle\sum\limits_{0\leqslant m\leqslant j-1}  \left(\dfrac{eq}{2.9q}\right)^m\ll \dfrac{X^{j-1}}{(j-1)!}.$$
Likewise when $j\leqslant eqX,$
\begin{align}
\nonumber
Q_j'(X)&=\displaystyle\sum\limits_{m+\ell=j-1}\dfrac{1}{m!(\ell-1)!}h^{(m)}(0) X^{\ell-1}\\\
\nonumber
&\ll\displaystyle\sum\limits_{m+\ell=j-1} \left( \dfrac{1}{2.9q}\right)^m \dfrac{X^{\ell-1}}{(\ell-1)!}\\
\nonumber
&\ll \dfrac{X^{j-2}}{(j-2)!}\displaystyle\sum\limits_{0\leqslant m\leqslant j-1} \left(\dfrac{eq}{2.9q}\right)^m\\
\nonumber
&\ll \dfrac{X^{j-2}}{(j-2)!}.
\end{align}
\end{proof}
\subsection{One of the main ingredients of Proposition $\ref{l14}$ and Theorem $\ref{l10}$}
\t*
\begin{proof}[Proof of Lemma \ref{l13}]\label{lm}
\label{Proof of Lemma 4.4}
By Lemma \ref{l11} we have that
\begin{equation}\label{nw}
N'(n+j-k,j)=\dfrac{q^{n+j-k}}{n+j-k}\Bigg \{Q_j(\log (n+j-k))+ O \left( \dfrac{(\log (n+j-k))^j}{j!(n+j-k)}\right) \Bigg \}.
\end{equation}
We observe $n-k\geqslant n\left(1-\frac{1}{B}\right)\to\infty$ as $n\to\infty$. We also note if $E_1=\frac{(\log (n-k))^{j+1}}{j!(n-k)^2}$ and $E_2=\frac{(\log (n+j-k))^j}{j!(n+j-k)^2}$ then as $n\to\infty$
\begin{align}
\nonumber
\dfrac{E_2}{E_1} &= \dfrac{1}{(\log (n-k))}\left (\dfrac{\log (n+j-k)}{\log (n-k)} \right)^j \dfrac{(n-k)^2}{(n+j-k)^2}\\
\nonumber
&\leqslant\left( 1+ \dfrac{\log\left(1+\dfrac{j}{n-k}\right)}{\log(n-k)}\right)^j\dfrac{1}{(\log(n-k))}\to 0\\
\nonumber
&\implies E_2=o(E_1),
\end{align}
where we have at the second step the term inside the bracket is bounded, because $\frac{j}{n-k}\leqslant \frac{eq\log(n-k)}{(n-k)}\to 0$ and hence for $n$ large enough it is bounded by $(1+\log 2/\log(n-k))^{eq\log(n-k)} \leqslant e^{eq\log 2}=2^{eq}$. Thus from \eqref{nw} we arrive at
\begin{equation}\label{nw1}
    N'(n+j-k,j)=q^{n+j-k}\left \{\dfrac{Q_j(\log (n+j-k))}{n+j-k}+ O \left( E_1\right)\right \}
\end{equation}
We show at a small cost it is possible to replace $\frac{Q_j(\log (n+j-k))}{n+j-k}$ by $\frac{Q_j(\log (n-k))}{n-k}$.
We use mean value theorem to study the error.\\
Let $F(x):=\frac{Q_j(\log x)}{x}$ which is a regular function if $x>0$. The difference caused by this replacement
$$D=\dfrac{Q_j(\log (n+j-k))}{n+j-k}-\dfrac{Q_j(\log (n-k)}{n-k}=F(n+j-k)-F(n-k).$$
By mean value theorem one can pick $y\in (n-k,n+j-k)$ such that
$$D=jF'(y)=j\dfrac{Q_j'(\log y)-Q_j(\log y)}{y^2}=\dfrac{j}{y^2}Q_j'(\log y)-\dfrac{j}{y^2}Q_j(\log y)=F_1+F_2.$$
Therefore \eqref{nw1} implies
\begin{equation}\label{nw2}
     N'(n+j-k,j)=q^{n+j-k}\left \{\frac{Q_j(\log (n-k))}{n-k}+ F_1+F_2+O \left( E_1\right)\right \}
\end{equation}
We apply Lemma \ref{l12}  with the choice $X=\log y$. We note from the hypothesis that $j\leqslant eqY \leqslant eq\log y$ and $y>n-k$, so that we must have $X=\log y\to\infty$ with $n$. Using this observation, we upper bound the first term of $F_1$ which gives
$F_1=\frac{j}{y^2}Q_j'(\log y)\ll \frac{j}{y^2} \frac{(\log(n+j-k))^{j-2}}{(j-2)!}.$ Thus we obtain\\
\begin{align}
\nonumber
\dfrac{F_1}{E_1} &= \dfrac{(n-k)^2j!}{(\log(n-k))^{j+1}} \dfrac{j}{y^2} \dfrac{(\log(n+j-k))^{j-2}}{(j-2)!}\\
\nonumber
&\leqslant\left(\dfrac{\log(n+j-k)}{\log(n-k)}\right)^{j-2} \dfrac{j^2(j-1)}{(\log(n-k))^3}\\
\nonumber
&\ll 1,
\end{align}
since $j\leqslant eq \log(n-k)$ and $y>n-k.$
Hence $F_1=O(E_1)$ and likewise from Lemma \ref{l12} one can show
$|F_2|=\frac{j}{y^2}Q_j(\log y)\ll \frac{j(\log(n+j-k))^{j-1}}{y^2(j-1)!}.$\\
Thus we infer
\begin{align}
\nonumber
\dfrac{|F_2|}{E_1} &= \dfrac{j!(n-k)^2}{(\log(n-k))^{j+1}}\dfrac{j(\log(n+j-k))^{j-1}}{y^2(j-1)!}\\
\nonumber
&\leqslant \left( \dfrac{\log(n+j-k)}{\log(n-k)}\right)^{j-1}\dfrac{j^2}{(\log(n-k))^2}\\
\nonumber
&\ll 1.
\end{align}
The proof is now complete once we plug $F_1+F_2=O(E_1)$ in \eqref{nw2}.
\end{proof}

\end{document}